\numberwithin{equation}{section}
\theoremstyle{definition}
 \newtheorem{thm}{Theorem}[section]
 \newtheorem{prp}[thm]{Proposition}
 \newtheorem{lem}[thm]{Lemma}
 \newtheorem{dfn}[thm]{Definition}
 \newtheorem{rmk}[thm]{Remark}
 \newtheorem{eg} [thm]{Example}
 \newtheorem{cnj}[thm]{Conjecture}
 \newtheorem*{ack}{Acknowledgements}
 \newtheorem*{rmk*}{Remark}
 \newtheorem*{not*}{Notation}
 \newtheorem*{thm*}{Theorem}
\newcommand{\bbC}{\mathbb{C}}
\newcommand{\bbP}{\mathbb{P}}
\newcommand{\bbZ}{\mathbb{Z}}
\newcommand{\calP}{\mathcal{P}}
\newcommand{\seteq}{\mathbin{:=}}
\newcommand{\vac}[1]{\left|#1 \right>}
\newcommand{\covac}[1]{\left< #1 \right|}
\newcommand{\pair}[2]{\left< #1 \kern 0.05em{|}   #2 \right>}
\def\ch{\mathop{\operator@font ch}\nolimits}
\def\id{\mathop{\operator@font id}\nolimits}
\def\sgn{\mathop{\operator@font sgn}\nolimits}
\def\Aut{\mathop{\operator@font Aut}\nolimits}
\def\End{\mathop{\operator@font End}\nolimits}
\def\Ind{\mathop{\operator@font Ind}\nolimits}
\def\Mod{\mathop{\operator@font Mod}\nolimits}
\def\Res{\mathop{\operator@font Res}}
\def\SU{\mathop{\operator@font SU}\nolimits}
\def\Vir{\mathop{\operator@font Vir}\nolimits}
\title[5d AGT conjecture]{Five-dimensional SU(2) AGT conjecture and recursive formula of deformed Gaiotto state}
\date{May 21, 2010; revised December 11, 2010}
\author{Shintarou Yanagida}
\keywords{Nekrasov partition function, deformed Virasoro algebra, q-deformation}
\address{Kobe University, Department of Mathematics, Rokko, Kobe 657-8501, Japan}
\email{yanagida@math.kobe-u.ac.jp}
\begin{document}

\begin{abstract}
This paper deals with the five-dimensional pure $\SU(2)$ Alday-Gaiotto-Tachikawa (AGT) conjecture 
proposed by Awata and Yamada.
We give a conjecture on a recursive formula for the inner product 
of the deformed Gaiotto state.
We also show that the $K$-theoretic pure $\mathrm{SU}(2)$ Nekrasov 
partition function satisfies the same recursion relation.
Therefore the five-dimensional AGT conjecture is 
reduced to our conjectural recursive formula.
\end{abstract}

\maketitle

\section{Introduction}

In \cite{AY:2010}, Awata and Yamada proposed a conjecture which relates 
the instanton part of Nekrasov's five-dimensional pure $\SU(2)$ partition 
function $Z(\Lambda,Q,q,t)$ \cite{N:2003,NY:2005} 
to the deformed Virasoro algebra \cite{SKAO:1996}.
The conjecture claims that $Z(\Lambda,Q,q,t)$ coincides with 
the inner product $\pair{G}{G}$ of the so-called deformed Gaiotto 
state $\vac{G}$. 
This state is a natural $q$-deformed analogue of the state proposed by Gaiotto 
in \cite{G:2008}.
These states are sorts of Whittaker vectors \cite{K:1978}.

This conjecture is a natural extension of the degenerate version \cite{G:2008} 
of the four-dimensional AGT conjecture \cite{AGT:2010}.
The original AGT conjecture claims that the instanton part of 
Nekrasov's four-dimensional $\SU(2)$ partition function with $N_f=4$ 
anti-fundamental hypermultiplets \cite{N:2003} coincides with 
the conformal block of the Virasoro algebra.

In \cite{FL:2010}, a strategy was proposed for the proof 
of the four-dimensional conjecture 
in the case of the theory with adjoint matter multiplet.
Its main idea is to show that both the Nekrasov partition function and 
the one-point conformal  block on the torus satisfy the same recursion formula.
On the conformal field theory side the recursion formula was conjectured 
by Poghossian \cite{P:2009} and derived in \cite{HJS:2010a}. 
On the Nekrasov side Fateev and Litvinov \cite{FL:2010} used 
the integral expression of the four-dimensional partition function to 
analyze its poles and residues, and they obtained the same recursion formula.

In \cite{HJS:2010b}, the same strategy was used for the proof of 
four-dimensional conjectures with $N_f=0,1,2$ anti-fundamental 
hypermultiplets. 
In these cases, 
the Nekrasov partition functions were again analytically analyzed 
with the help of the integral expressions 
which is similar to that of \cite{FL:2010}.
And the conforma field theoretical recursion formulas were derived through 
degeneration (decoupling) limit technique \cite{MMM:2009} 
of the Zamolodchikov elliptic recursive formula for the four-point 
conformal block on the sphere \cite{Z:1984}.

Note that the strategy of the proofs of the AGT conjectures 
using recursive formulas are restricted to the case of $N_f\le 2$. 
This is because there are some difficulties in the asymptotic analysis 
on the integral expression 
of the Nekrasov partition function for the theory with $N_f\ge3$, 
as mentioned in \cite{FL:2010, HJS:2010b}. 

What we may stress here is that 
in our consideration and in those proofs 
only the instanton part of the partition function is considered, 
although the full partition function consists of three contributions : 
classical, instanton and one-loop parts.
As mentioned in \cite[\S 4]{AGT:2010}, 
the one-loop factor reproduces 
the product of the DOZZ (Dorn-Otto-Zamolodchikov-Zamolodchikov) 
three-point functions \cite{DO:1994,ZZ:1996} 
of the Liouville theory.
And there is a general proposal that 
Nekrasov's full partition function, integrated 
over the vev (vacuum expectation value)
with a natural measure, is the $n+3$ point correlation function on a sphere, 
and is also related to the $n$ point function on a torus.
This point of view is out of the scope of our paper.

The aim of this paper is two-fold.
First we propose Conjecture \ref{cnj:main} on the recursion formula for the inner product $\pair{G}{G}$ of the deformed Gaiotto state.
This is an analogue of the recursive formulas appearing 
in \cite{Z:1984,P:2009,MMM:2009,HJS:2010a}.
Our conjecture suggests a new resemblance between 
the Fock representation of the Virasoro algebra 
and that of the deformed Virasoro algebra.

The second aim is to prove Theorem \ref{thm:main} which states that 
the $K$-theoretic Nekrasov partition function $Z(\Lambda,Q,q,t)$ satisfies 
the same recursion formula proposed in Conjecture \ref{cnj:main}.
In the proof we utilize the integral form of the five-dimensional 
partition function, which in itself seems to be new.

As a consequence, the conjecture of Awata and Yamada \cite{AY:2010} 
is reduced to Conjecture \ref{cnj:main}.
At this point, not enough is known on the representation of the deformed 
Virasoro algebra, so that we do not have a clue to solve our conjecture.

This paper is organized as follows.
In \S \ref{sect:dva} we recall the deformed Virasoro algebra and 
the deformed Gaiotto state.
We will state the main conjecture on the inner product of the deformed Gaiotto 
state in this section.
In \S \ref{sect:Nek} we introduce the integral expression of the $K$-theoretic 
Nekrasov partition function $Z(\Lambda,Q,q,t)$.
In \S \ref{sect:rec} we prove Theorem \ref{thm:main}. 

\begin{not*}
Throughout in this paper, we follow \cite{M:1995:book} for the notations 
of partitions. 
For a positive integer $n$, a partition $\lambda$ of $n$ is a series of 
positive integers $\lambda=(\lambda_1,\lambda_2,\ldots)$ such that 
$\lambda_1\ge\lambda_2\ge\cdots$ and $\lambda_1+\lambda_2+\cdots=\lambda$.
If $\lambda$ is a partition of $n$, then we define $|\lambda|\seteq n$.
The number $\ell(\lambda)$ is defined to be 
the length of the sequence $\lambda$.
The conjugate partition of $\lambda$ is denoted by $\lambda'$.

In addition we denote by $\calP$ 
the set of all the partitions of natural numbers 
including the empty partition $\emptyset$.
we also denote by $\calP_n$ the set of partitions of $n$.
$p(n)\seteq\#\calP_n$ denotes the number of partitions of $n$.

We also follow \cite{M:1995:book} for the convention of the Young diagram.
Thus the first coordinate $i$ (the row index) increases as one goes downwards,
and the second coordinate $j$ (the column index) increases as one goes 
rightwards. 
We denote by $\square=(a,b)$ the box located at the coordinate $(a,b)$,
and denote the coordinate by $i(\square)\seteq a$ and $j(\square)\seteq b$.

For a box $\square=(i,j)$, the (relative) arm and the leg are defined to be 
\begin{align*}
a_{\lambda}(\square)\seteq \lambda_i-j,\quad
l_{\lambda}(\square)\seteq \lambda'_j-i.
\end{align*}

Finally, all the algebras are defined over $\bbC$.
\end{not*}

\begin{ack}
The author is supported by JSPS Fellowships for Young Scientists (No.21-2241).
He would like to thank Prof. Yasuhiko Yamada for the valuable discussion, 
and also thank the adviser Prof. K\={o}ta Yoshioka for his interest 
and comments. 
\end{ack}

\section{Deformed Virasoro algebra $\Vir_{q,t}$}
\label{sect:dva}

\subsection{Definition}
Let $q,t$ be two generic complex parameters.
Set $p\seteq q/t$ and assume that $p$ is not a root of $-1$.
The deformed Virasoro algebra $\Vir_{q,t}$ \cite{SKAO:1996,BP:1998} 
is defined to be 
the associative algebra generated by $\{T_n \mid n\in\bbZ\}$ 
with relations 
\begin{align*}
[T_n,T_m]
=-\sum_{l=1}^\infty &f_l(T_{n-l}T_{m+l}-T_{m-l}T_{n+l})\\
                    &-\dfrac{(1-q)(1-t^{-1})}{1-p}(p^n-p^{-n})\delta_{m+n,0},\\
\end{align*}
where the coefficients $f_l$'s are determined through
\begin{align*}
f(x)=\sum_{l\ge 0}f_l x^l
    =\exp\Big(\sum_{n\ge1}\dfrac{(1-q)(1-t^{-1})}{1+p^n}\dfrac{x^n}{n}\Big).
\end{align*}

$\Vir_{q,t}$ has a $\bbC$-linear anti-involution $\iota$ defined by
\begin{align}
\label{eq:inv}
\iota(T_n)=T_{-n}.
\end{align}
For a partition $\lambda=(\lambda_1,\lambda_2,\ldots,\lambda_\ell)$ 
($\lambda_1\ge\lambda_2\ge\cdots>0$), 
let us introduce the symbols 
$T_\lambda\seteq T_{\lambda_\ell} T_{\lambda_{\ell-1}}\cdots T_{\lambda_1}$
and 
$T_{-\lambda}\seteq 
 \iota(T_\lambda)=T_{-\lambda_1}T_{-\lambda_2}\cdots T_{\lambda_\ell}$.

\subsection{Verma module and Deformed Gaiotto state}
\label{subsec:dGs}

This subsection follows \cite[\S 3.2]{AY:2010}.

For $h\in\bbC$, 
let $\vac{h}$ be a vector and define the actions of the generators by 
$T_0\vac{h}= h\vac{h}$ and
$T_n\vac{h}= 0$ for any $n\in\bbZ_{>0}$.
Then the Verma module $M_h$ for $\Vir_{q,t}$ is defined to be 
the $\Vir_{q,t}$-module generated by $\vac{h}$.

Let us introduce the outer grading operator $d$ satisfying $[d,T_n]=-n T_n$.
Defining the action of $d$ on $M_h$ by $d \vac{h}=0$, 
we have the direct decomposition $M_h=\oplus_{n\in\bbZ_{\ge0}}M_{h,n}$.
$M_{h,n}$ has a basis consisting of the vectors
\begin{align*}
T_{-\lambda}\vac{h}
= T_{-\lambda_1}T_{-\lambda_2}\cdots T_{-\lambda_\ell}\vac{h},
\end{align*}
each of which is indexed by a partition 
$\lambda=(\lambda_1,\lambda_2,\ldots,\lambda_\ell)$,
$\lambda_1\ge\lambda_2\ge\cdots>0$, with $|\lambda|=n$.

The dual module $M_h^*$ is similarly defined.
It is generated by the highest weight vector $\covac{h}$ satisfying 
$\covac{h}T_n=0$ for any $n\in\bbZ_{<0}$ and $\covac{h}T_0=h\covac{h}$.
$M_h^*$ has a basis consisting of the vectors
\begin{align*}
\covac{h}T_{\lambda}
=\covac{h}T_{\lambda_\ell} T_{\lambda_{\ell-1}}\cdots T_{\lambda_1},
\end{align*}
each of which is indexed by a partition 
$\lambda=(\lambda_1,\lambda_2,\ldots,\lambda_\ell)$.

Also recall the bilinear contravariant form $S$ on $M_h\otimes M_h\to\bbC$:
\begin{align*}
S(T_\lambda\vac{h},T_\mu\vac{h})\seteq 
\covac{h}T_\mu T_{-\lambda}\vac{h},
\end{align*} 
which is uniquely determined by $\pair{h}{h}=1$.
By the outer grading operator $d$ defined by $[d,T_n]=n T_n$,
we find that $S(T_\lambda\vac{h},T_\mu\vac{h})=0$ unless $|\lambda|=|\mu|$.
The determinant 
\begin{align*}
D_n(h,q,t)
\seteq (\det S(T_\lambda\vac{h},T_\mu\vac{h}))_{\lambda,\mu\in\calP_n}
\end{align*}
is called the Kac determinant for $\Vir_{q,t}$.

\begin{dfn}
Deformed Gaiotto state $\vac{G}\in M_h$ is defined to be a 
vector satisfying the condition:
\begin{align*}
T_1\vac{G}=\Lambda^2\vac{G},\quad
T_n\vac{G}=0\ (n\ge2),
\end{align*}
where $\Lambda^2$ is a (non-zero) complex number.
The dual vector $\covac{G}\in M_h^*$ is defined similarly:
\begin{align*}
\covac{G}T_{-1}=\Lambda^2\covac{G},\quad
\covac{G}T_n=0\ (n\le-2).
\end{align*}
\end{dfn}

\begin{lem}
If $q$, $t$ and $h$ are  generic complex numbers,
then the deformed Giotto state $\vac{G}$ uniquely exists.
\end{lem}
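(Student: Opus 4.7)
The plan is to construct $\vac{G} = \sum_{N\ge 0}\vac{G_N}$ inductively on the grade, with $\vac{G_0} = \vac{h}$. Using the grading by $d$, the two Gaiotto conditions split into the recursion $T_1\vac{G_N} = \Lambda^2\vac{G_{N-1}}$ and the annihilations $T_n\vac{G_N} = 0$ for $n\ge 2$, so the problem reduces to showing that, given $\vac{G_{N-1}}$, a unique $\vac{G_N}\in M_{h,N}$ satisfying these conditions exists. The central tool will be non-degeneracy of the contravariant form $S$ on each graded piece $M_{h,N-n}$, which holds for generic $q,t,h$ because the Kac determinant $D_N(h,q,t)$ is nonzero.

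For uniqueness I would compute, directly from the two conditions, the pairings $\covac{h}T_\lambda\vac{G_N}$ for all $\lambda\in\calP_N$. Since $T_\lambda = T_{\lambda_\ell}\cdots T_{\lambda_1}$ ends with the largest part $T_{\lambda_1}$, the annihilation forces $\covac{h}T_\lambda\vac{G_N} = 0$ whenever $\lambda_1\ge 2$; and iterating $T_1\vac{G_m} = \Lambda^2\vac{G_{m-1}}$ yields $T_1^N\vac{G_N} = \Lambda^{2N}\vac{h}$, hence $\covac{h}T_{(1^N)}\vac{G_N} = \Lambda^{2N}$. Because the Gram matrix of $S$ on $M_{h,N}$ is invertible by Kac non-degeneracy, these $p(N)$ scalar equations pin down $\vac{G_N}$ uniquely. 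For existence, I would take $\vac{G_N}$ to be the unique vector realizing $\covac{h}T_\lambda\vac{G_N} = \Lambda^{2N}\delta_{\lambda,(1^N)}$ and then verify the Gaiotto conditions.

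The verification, which I expect to be the main obstacle, reduces by non-degeneracy of $S$ on $M_{h,N-n}$ to checking that $\covac{h}T_\mu T_n\vac{G_N}$ takes the prescribed value for every $\mu\in\calP_{N-n}$ and every $n\ge 1$. Expanding $T_\mu T_n$ in the PBW basis of $\Vir_{q,t}$ and using the known values of $\covac{h}T_\lambda\vac{G_N}$, the entire contribution comes from the coefficient of $T_{(1^N)} = T_1^N$, and I plan to show this coefficient equals $\delta_{(\mu,n),((1^{N-1}),1)}$ by a length-counting argument. The commutator $[T_a,T_b] = -\sum_l f_l(T_{a-l}T_{b+l} - T_{b-l}T_{a+l}) - (\text{scalar})\delta_{a+b,0}$ replaces a two-letter monomial by two-letter monomials plus, when $a+b=0$, a pure scalar, so every descendent of $T_\mu T_n$ has at most $\ell(\mu)+1$ letters; the left $\covac{h}$-action with $\covac{h}T_0 = h\covac{h}$ and $\covac{h}T_{-k} = 0$ for $k>0$ can only decrease the number of surviving positive-indexed letters. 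For $n\ge 2$ one gets $\ell(\mu)+1 \le N-n+1 \le N-1 < N$, ruling out the length-$N$ partition $(1^N)$ and forcing coefficient $0$, which matches $T_n\vac{G_N}=0$. For $n=1$ equality is saturated only by $\mu = (1^{N-1})$, where $T_\mu T_1 = T_1^N$ gives coefficient $1$, reproducing $\Lambda^2\covac{h}T_\mu\vac{G_{N-1}} = \Lambda^{2N}$ from the induction hypothesis.
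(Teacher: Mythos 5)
Your proof is correct and follows essentially the same route as the paper, which simply invokes the non-vanishing of the Kac determinant \eqref{eq:kac}: you reduce existence and uniqueness to the non-degeneracy of the contravariant form $S$ on each graded piece $M_{h,N}$ and then supply the dual-basis characterization $\covac{h}T_\lambda\vac{G_N}=\Lambda^{2N}\delta_{\lambda,(1^N)}$ together with the letter-counting verification that the paper leaves implicit. The only tacit ingredient, shared with the paper, is the straightening (PBW-type) property of $\Vir_{q,t}$ underlying the basis statement for $M_{h,N}$.
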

\begin{proof}
This is the consequence of 
the non-vanishing of the Kac determinant for $\Vir_{q,t}$,
which follows from the factorized formula \cite{BP:1998}:
\begin{align}
\label{eq:kac}
D_n(h,q,t)=C_n 
 \prod_{\substack{r,s\in\bbZ_{\ge1}\\ r s\le n}}(h^2-h_{r,s}^2)^{p(n-r s)}
 \left(\dfrac{(1-q^r)(1-t^{-r})}{q^r+t^r}\right)^{p(n-r s)}.
\end{align}
Here $h_{r,s}\seteq t^{r/2}q^{-s/2}+t^{-r/2}q^{s/2}$, and 
$C_n$ is a constant independent of $q$, $t$ and $h$.
\end{proof}

\subsection{Conjecture on the inner product of the deformed Gaiotto state}

In \cite{AY:2010}, Awata and Yamada conjectured that  
the inner product $\pair{G}{G}$ is equal to the five-dimensional pure $\SU(2)$ 
Nekrasov partition function.
To express $\pair{G}{G}$, it is convenient to introduce the parameter $Q$ by 
\begin{align*}
h=Q^{1/2}+Q^{-1/2}.
\end{align*}
From the Kac determinant formula \eqref{eq:kac},
$\pair{G}{G}$ is a rational function of $q,t$ and $h^2$.
Thus it is a rational function of $q,t$ and $Q$.

We propose a recursive formula on the inner product $\pair{G}{G}$ 
of the deformed Gaiotto state $\vac{G}$.
\begin{cnj}\label{cnj:main}
Assume that $q$ and $t$ are generic complex numbers and that $\Lambda$ is a 
non-zero complex number.
Let $\vac{G}$ be the deformed Gaiotto state of $\Vir_{q,t}$ (see \S \ref{subsec:dGs} for the definition).
Then $\pair{G}{G}$ is of the form 
$F(\Lambda,Q,q,t)=\sum_{n=0}^\infty (\Lambda^{4}t/q)^{n} F_n(Q,q,t)$,
where $F_n(Q,q,t)$ is a rational function 
in terms of the variables $Q,q,t$. 
Moreover it satisfies the following recursive relation :
\begin{align}
\label{eq:rec}
F_n(Q,q,t)
=\delta_{n,0}+\sum_{\substack{r,s\in\bbZ, \\ 1\le r s\le n}}
 \dfrac{G(r,s;q,t)F_{n-r s}(q^r t^{s},q,t)}{Q-q^r t^{-s}}.
\end{align}
Here we introduced the rational function
\begin{align}
\label{eq:g}
G(r,s;q,t)\seteq -\sgn(r) q^r t^{-s} 
\prod_{\substack{-|r|\le i\le|r|-1, \\ -|s|\le j\le|s|-1, \\ (i,j)\neq (0,0)}}
\dfrac{1}{1-q^i t^{-j}}
\end{align}
with $\sgn(r)=1$ if $r>0$ and $\sgn(r)=-1$ if $r<0$.
\end{cnj}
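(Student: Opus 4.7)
The plan is to adapt the Zamolodchikov-Poghossian strategy (used in \cite{Z:1984,P:2009,MMM:2009,HJS:2010a} for undeformed Virasoro conformal blocks) to $\Vir_{q,t}$: view $F_n(Q,q,t)$ as a rational function of $Q$ and pin it down by its poles, residues and value at $Q\to\infty$.

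First I would establish the analytic structure. Solving the defining conditions for $\vac{G}$ in the basis $\{T_{-\lambda}\vac{h}\}_{\lambda\in\calP}$ expresses each coefficient as a rational function of $Q$ whose denominator divides the Kac determinant $D_{|\lambda|}(h,q,t)$, so $F_n(Q,q,t)$ is rational in $Q$ with all poles contained in the zero locus of $D_n$. By \eqref{eq:kac} that locus is exactly the set $\{Q=q^r t^{-s} : 1\le rs\le n\}$, matching the sum in \eqref{eq:rec}. The normalization $(\Lambda^4 t/q)^n$ in the definition of $F_n$ is chosen so that in the free-field limit $Q\to\infty$, where the Shapovalov form at each fixed level is dominated by its diagonal piece, the Gaiotto conditions yield $F_n\to\delta_{n,0}$ (already visible at $n=1$ from the explicit inversion $\pair{G_1}{G_1}=\Lambda^4/(f_1 h^2+\cdots)$).

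Next, for the residue at a pole $Q=q^r t^{-s}$: there $M_h$ contains a singular vector $\chi_{r,s}$ at level $rs$. One checks using the commutation relations that any vector annihilated by all $T_n$ with $n\ge 1$ is automatically preserved under $T_0$, so $\chi_{r,s}$ generates a Verma-like submodule on which $T_0$ acts by a new eigenvalue $h'$; the corresponding shifted $Q'$-parameter should equal $q^r t^s$, which matches the argument of $F_{n-rs}$ on the RHS of \eqref{eq:rec}. This shift is characteristic of $\Vir_{q,t}$: unlike in the undeformed case, $T_0$ is a genuine generator and its eigenvalue on $\chi_{r,s}$ differs from $h$. Writing $\vac{G}$ as a Laurent series in $(Q-q^r t^{-s})$, the singular part must lie in $\langle\chi_{r,s}\rangle$ and, because the Gaiotto conditions descend, must be proportional to $\chi_{r,s}$ acted on by the Gaiotto state $\vac{G'}$ of the shifted Verma module. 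Pairing with $\covac{G}$ then produces $\pair{G'}{G'}$ multiplied by the Shapovalov norm of $\chi_{r,s}$: the product $\prod_{(i,j)\ne(0,0)}(1-q^i t^{-j})^{-1}$ in \eqref{eq:g} is expected to emerge from the leading coefficient of $D_n(h,q,t)$ at $h^2=h_{r,s}^2$ via a hook-content type expansion in the spirit of Macdonald theory, while the prefactor $-\sgn(r)q^r t^{-s}$ should come from matching the $T_1$-raising chain against the highest weight vector of the shifted module. Combined with the asymptotic behavior at $Q\to\infty$, this uniquely determines $F_n$ as a rational function of $Q$, yielding \eqref{eq:rec}.

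The main obstacle is the residue step. It requires an explicit closed-form construction of $\chi_{r,s}$ in $\Vir_{q,t}$, the computation of the $T_0$-eigenvalue $h'$ on $\chi_{r,s}$, and a hook-content formula for its Shapovalov norm. In the undeformed Virasoro case these are classical (Feigin--Fuchs, Benoit--Saint-Aubin), but their deformed counterparts are not yet developed, as the authors themselves emphasize. A natural route would be through the bosonic screening-current realization of \cite{SKAO:1996}, writing $\chi_{r,s}$ as a screened vertex-operator product and extracting its norm from free-field contractions; however, turning this into a rigorous derivation of exactly the rational function $G(r,s;q,t)$ in \eqref{eq:g} would demand further representation-theoretic input on $\Vir_{q,t}$ that does not currently appear to be available.
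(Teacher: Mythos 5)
The statement you have been asked to prove is Conjecture \ref{cnj:main}: the paper itself offers no proof of it. The authors prove only the gauge-theory half of the story (Theorem \ref{thm:main}, the same recursion for $Z_n$ via the integral representation of \S\ref{sect:Nek}) and state explicitly that ``not enough is known on the representation of the deformed Virasoro algebra'' to attack the conjecture itself. So there is no proof in the paper to compare yours against, and your text must be judged as a free-standing attempt on an open problem. As such it is the natural Zamolodchikov--Poghossian strategy, and your first step is sound: the coefficients of $\vac{G}$ in the basis $T_{-\lambda}\vac{h}$ are obtained by inverting the contravariant form, so $F_n$ is rational in $Q$ with poles confined to the zero locus of the Kac determinant \eqref{eq:kac}, which (after the relabelling $Q=t^rq^{-s}\leftrightarrow q^{-s}t^{-(-r)}$) is exactly the set $\{q^rt^{-s}:1\le rs\le n\}$. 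The observation that $T_0$ preserves the space of singular vectors is also correct and easily checked from the commutation relations.

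However, the argument is not a proof, for reasons you partly concede. (i) Pole location is not pole order: $D_n$ vanishes to order $p(n-rs)$ at $h^2=h_{r,s}^2$, so confinement of the poles to the zero locus gives no control on their multiplicity, and the paper's own remark after the conjecture emphasizes that simplicity of the poles of $(S_{q,t}^{(n)})^{-1}(1^n,1^n)$ is a nontrivial theorem even in the undeformed case (\cite{B:2003,O:1992}); your sketch derives simplicity only as a byproduct of the residue step, which is itself unproven. (ii) The residue step rests entirely on representation-theoretic inputs for $\Vir_{q,t}$ that are not available: an explicit singular vector $\chi_{r,s}$ at level $rs$, the computation of its $T_0$-eigenvalue and the verification that the corresponding shifted parameter is $Q'=q^rt^{s}$, the identification of the singular part of $\vac{G}$ with the Gaiotto state of the submodule generated by $\chi_{r,s}$, and a closed product formula for the leading coefficient of the norm yielding exactly $G(r,s;q,t)$ of \eqref{eq:g}. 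Each of these is asserted (``should equal'', ``is expected to emerge''), not proved. (iii) The determination of $F_n$ from poles and residues additionally requires the asymptotics $F_n(Q)\to\delta_{n,0}$ as $Q\to\infty$, i.e.\ a degree bound on $F_n$ as a rational function of $Q$; you motivate this only by the $n=1$ case. Until (i)--(iii) are supplied, the conjecture remains open, exactly as the paper leaves it.
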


\begin{rmk}
(1)
Recall that in the limit $q=t^\beta$, $t\to 1$, 
the deformed algebra $\Vir_{q,t}$ reduces to the Virasoro algebra 
with the central charge  $c=1-6(\sqrt{\beta}-1/\sqrt{\beta})^2$.
In this limit the deformed Gaiotto state $\vac{G}$ reduces to the 
Gaiotto state $\vac{\Delta,\Lambda^2}$ 
for the $N_f=0$ case \cite[(2.3)]{G:2008}, 
and our conjecture gives the formula appearing 
in \cite[(4.11)]{P:2009} and \cite[(20)]{HJS:2010b}.

(2)
Let us write $S_{q,t}^{(n)}$ the matrix of size $p(n)$ 
defined to be 
\begin{align*}
S_{q,t}^{(n)}\seteq (S(T_\lambda\vac{h},T_\mu\vac{h}))_{\lambda,\mu\in\calP_n}.
\end{align*} 
Note that we have 
\begin{align*}
\pair{G}{G}=\big(S_{q,t}^{(n)}\big)^{-1}(1^n,1^n),
\end{align*}
where the right hand side is the $(1^n,1^n)$-element of 
the inverse matrix of $S_{q,t}^{(n)}$.
Our conjecture implies that 
all the poles of this element with respect to the variable $Q$ are simple.
It is worth noting that in the Virasoro case 
this simple-pole phenomenon is a non-trivial fact. 
By \cite{MMM:2009} the inner product 
$\pair{\Delta,\Lambda^2}{\Delta,\Lambda^2}$ is equal to 
$\sum_{n}\Lambda^{4n}Q_{\Delta}^{-1}(1^n,1^n)$,
where $Q_{\Delta}^{-1}(1^n,1^n)$ is the $(1^n,1^n)$-element 
of the inverse matrix of the contravariant form $Q_{\Delta}$  
of the Virasoro algebra.
In the mathematical literature,
the simple-pole phenomenon of the inverse contravariant form is proved 
in  \cite{B:2003} for the Virasoro case and 
in \cite{O:1992} for the finite Lie algebra case.
See also \cite[Chapter 5]{H:2008:book}.
\end{rmk}

\section{Integral expression of the $K$-theoretic Nekrasov partition function}\label{sect:Nek}

\subsection{Combinatorial definition of the $K$-theoretic Nekrasov partition}
Recall that the instanton part of the $K$-theoretic Nekrasov partition 
function was defined as the integration in the equivariant $K$-theory on the 
framed moduli space of torsion free sheaves on $\bbP^2$ 
in \cite{N:2003,NY:2005}. 
By the Atiyah-Bott-Lefshetz localization theorem 
it becomes a summation over the fixed point contribution. 
These fixed points are parametrized by $r$-tuples of Young diagrams, 
and one obtains a combinatorial form of the partition function.

Following the notation in \cite{AY:2010}, 
we introduce the $K$-theoretic Nekrasov partition function $Z(\Lambda,Q,q,t)$ 
as follows.
\begin{align}
\label{eq:nek}
&Z(\Lambda,Q,q,t)\seteq \sum_{\lambda,\mu\in\calP}
 (\Lambda^4 t/q)^{|\lambda|+|\mu|} Z_{\lambda, \mu}(Q,q,t),
\\
\label{eq:z:lm}
&Z_{\lambda,\mu}(Q,q,t)\seteq
 \dfrac{1}
       {N_{\lambda,\lambda}(1)N_{\mu,\mu}(1)
        N_{\lambda,\mu}(Q)N_{\mu,\lambda}(Q^{-1})},
\\
\label{eq:N}
&N_{\lambda,\mu}(Q)\seteq
 \prod_{(i,j)\in\mu}    (1-Q q^{\lambda_i-j}t^{\mu_j^\vee-i+1})
 \prod_{(i,j)\in\lambda}(1-Q q^{-\mu_i+j-1}t^{-\lambda_j^\vee+i}).
\end{align}

For the convenience of the discussion let us introduce the graded part 
$Z_n(Q,q,t)$ of $Z(\Lambda,Q,q,t)$ by 
\begin{align*}
Z_n(Q,q,t)\seteq \sum_{\substack{\lambda,\mu\in\calP, \\ |\lambda|+|\mu|=n}}
 Z_{\lambda, \mu}(Q,q,t).
\end{align*}
Then we have 
$Z(\Lambda,Q,q,t)=\sum_{n=0}^\infty (\Lambda^{4}t/q)^{n}Z_n(Q,q,t)$.

Note also that $Z_{\lambda,\mu}$ enjoys the following relation :
\begin{align}
\label{eq:Z:duality}
Z_{\lambda,\mu}(Q,q,t)=Z_{\mu,\lambda}(Q^{-1},q,t).
\end{align}

\subsection{Integral expression}
We have the next integral expression of the $K$-theoretic Nekrasov partition 
function. 
Note that the four-dimensional counterpart appeared in \cite{N:2003} and was investigated in \cite{FL:2010,HJS:2010b}. 

\begin{prp}\label{prp:int}
Let $Q,q,t$ be generic complex numbers.
Then the following integral formula holds.
\begin{align}
\label{eq:int}
Z_n(Q,q,t)
=\dfrac{1}{n!}\Bigg(\dfrac{1-q t^{-1}}{(1-q)(1-t^{-1})}\Bigg)^n
 \oint_{C_n}\dfrac{d x_n}{2\pi\sqrt{-1}}\cdots
 \oint_{C_1}\dfrac{d x_1}{2\pi\sqrt{-1}} 
 \Bigg(\\
\nonumber
 \prod_{k=1}^n P(x_k;Q^{1/2},q^{-1} t) \prod_{1\le i<j\le n} \omega(x_j/x_i;q,t^{-1},q^{-1} t)\Bigg),
\end{align}
Here we introduced rational functions
\begin{align*}
&P(x;a,p)\seteq\dfrac{x}{(x-a)(x-a^{-1})(x-p a)(x-p a^{-1})},
\\
&\omega(y;q_1,q_2,q_3)\seteq
  \dfrac{(y-1)^2(y-q_3)(y-q_3^{-1})}{(y-q_1)(y-q_1^{-1})(y-q_2)(y-q_2^{-1})}.
\end{align*}
and the contour $C_k$ is chosen so that it surrounds only the poles at 
$Q^{1/2}$, $Q^{-1/2}$, $x_j q$ and $x_j t^{-1}$ for $j\neq k$.
\end{prp}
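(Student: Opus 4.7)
The plan is to evaluate the $n$-fold iterated contour integral on the right-hand side of \eqref{eq:int} by residues and identify the result with the combinatorial sum $Z_n(Q,q,t)=\sum_{|\lambda|+|\mu|=n}Z_{\lambda,\mu}(Q,q,t)$. The poles of the integrand in each variable $x_k$ that lie inside $C_k$ come from two sources: the factor $P(x_k;Q^{1/2},q^{-1}t)$ contributes simple poles at $x_k=Q^{\pm1/2}$, while the $\omega$ factors involving $x_k$ contribute simple poles at $x_k=x_j q$ and $x_k=x_j t^{-1}$ for $j\neq k$. The numerator $(y-1)^2(y-q^{-1}t)(y-qt^{-1})$ of $\omega$ ensures that residue configurations which would cause two $x_k$'s to coincide, or which would describe a box pattern incompatible with a partition shape, contribute zero.

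Iterating the residues, each nonzero configuration corresponds to a pair of Young diagrams $(\lambda,\mu)$ with $|\lambda|+|\mu|=n$ together with a total order on its boxes refining the growth order: a residue at $x_k=Q^{\pm1/2}$ plants the $(1,1)$-corner of $\lambda$ or $\mu$, and a residue at $x_k=x_j q$ (resp.\ $x_j t^{-1}$) attaches a box immediately to the right of (resp.\ below) the box already assigned to $x_j$. Using the $S_n$-symmetry of the integrand in $x_1,\ldots,x_n$ (the factor $\omega$ satisfies $\omega(y^{-1})=\omega(y)$), all $n!$ such orderings for a given $(\lambda,\mu)$ yield the same residue value, and the prefactor $1/n!$ collapses them into a single summand. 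At the residue one has $x_k=Q^{\epsilon_k/2}q^{j(\square_k)-1}t^{1-i(\square_k)}$, with $\epsilon_k\in\{\pm1\}$ tracking which diagram $\square_k$ belongs to.

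The core technical step, and the main obstacle, is to verify that this residue equals $Z_{\lambda,\mu}(Q,q,t)$ as in \eqref{eq:z:lm}--\eqref{eq:N}. The residue is a product of factors $(1-Q^{\pm1}q^{a}t^{-b})$ coming from $P$ and $(1-q^{a'}t^{-b'})$ coming from $\omega$, and the task is to reorganize this product into $1/[N_{\lambda,\lambda}(1)N_{\mu,\mu}(1)N_{\lambda,\mu}(Q)N_{\mu,\lambda}(Q^{-1})]$. A tractable strategy is induction on $n$: fixing a reading order on boxes, the ratio of the residues for $(\lambda,\mu)$ and for the configuration with the last-added box $\square$ removed is a product of the new $P$-factor at $\square$ and the new $\omega$-factors pairing $\square$ with each earlier box; this ratio must match the corresponding ratio of Nekrasov factors, which is controlled by the arm-leg data at $\square$ via the standard single-box recursion for $N_{\lambda,\mu}$ (cf.\ \cite{N:2003,NY:2005}). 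The prefactor $[(1-qt^{-1})/(1-q)(1-t^{-1})]^n$ is precisely what is needed to cancel the spurious factors arising when the $\omega$ pole at $x_k=x_j q$ (or $x_j t^{-1}$) is evaluated against the nonsingular $\omega$-factors at the adjacent boxes. Careful bookkeeping of signs and powers of $q,t$ reduces the proof to a finite list of elementary rational identities in $Q,q,t$. An alternative, more conceptual route would be to derive \eqref{eq:int} from equivariant $K$-theoretic localization on the framed instanton moduli space via a Jeffrey--Kirwan type residue presentation, bypassing the combinatorics at the cost of additional geometric setup.
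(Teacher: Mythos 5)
Your proposal follows essentially the same route as the paper's proof: evaluate the iterated integral by residues, observe that the non-vanishing pole configurations are labelled by pairs of partitions $(\lambda,\mu)$ via the specialization $x=Q^{\pm1/2}q^{j-1}t^{1-i}$, use the symmetry of the integrand to absorb the $1/n!$, and identify the residue with $Z_{\lambda,\mu}$ by induction on removing the last box and matching the resulting single-box ratios. The only difference is one of completeness: the paper actually carries out the ratio identity $Z_{\lambda,\mu}/Z_{\lambda,\widetilde{\mu}}=I_{\lambda,\mu}/I_{\lambda,\widetilde{\mu}}$ explicitly (together with the duality $Z_{\lambda,\mu}(Q)=Z_{\mu,\lambda}(Q^{-1})$ to reduce to $\mu=\emptyset$), and this computation occupies most of its proof, whereas you assert that the bookkeeping reduces to elementary rational identities without performing it, so your argument is a correct outline of the same proof rather than a self-contained verification.
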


Before starting the proof, we show some examples of the integration.
See \cite[\S 2]{FL:2010} for the four-dimensional counterparts.

\begin{eg}
The case $n=0$ holds trivially.
Let us denote this situation by the pair of empty 
partitions $(\emptyset,\emptyset)$

For the case $n=1$, note that the poles with respect to the variable $x_1$ 
are at $Q^{1/2}$ and $Q^{-1/2}$. 
If the pole $Q^{1/2}$ is chosen, then let us associate the pair of partitions
$((1),\emptyset)$.
In this situation the integral is calculated as 
\begin{align*}
&\dfrac{1-q t^{-1}}{(1-q)(1-t^{-1})}
\dfrac{Q^{1/2}}{(Q^{1/2}-Q^{-1/2})(Q^{1/2}-q^{-1} t Q^{1/2})(Q^{1/2}-q^{-1}t Q^{-1/2})}\\
&=\dfrac{1}{(t-1)(q^{-1}-1)(1-Q)(1-q^{-1}t Q^{-1})},
\end{align*}
which is nothing but $Z_{(1),\emptyset}(Q,q,t)$.
Similarly for the choice $Q^{-1/2}$ we associate the pair of Young diagrams
$(\emptyset,(1))$.
The corresponding integral yields $Z_{\emptyset,(1)}(Q,q,t)$.
Thus we have shown that the right hand side of \eqref{eq:int} is $Z_1(Q,q,t)$.

In the case $n=2$, the integral over the variable $x_1$ surrounds two poles 
$Q^{1/2}$ and $Q^{-1/2}$.
Let us choose $Q^{1/2}$ and associate the pair $((1),\emptyset)$.
Then the integral becomes (excluding the factor $(2!)^{-1}$)
\begin{align*}
&\dfrac{(1-q^{-1}t)}{(q^{-1}-1)^2(t-1)^2}\cdot \dfrac{Q^2}{(Q-1)(Q-q^{-1}t)}
\\
&\times
\dfrac{(x_2-q t^{-1}Q^{1/2})(x_2-Q^{1/2})x_2}
      {(x_2-Q^{-1/2})(x_2-q Q^{1/2})(x_2-q^{-1}Q^{1/2})}
\\
&\times
\dfrac{1}{(x_2-t^{-1}Q^{1/2})(x_2-t Q^{1/2})(x_2-t q^{-1}Q^{-1/2})}.
\end{align*}
By the condition of the contour $C_2$,
the choices of the poles with respect to $x_2$ are 
$Q^{-1/2}$, $q Q^{1/2}$, $t^{-1} Q^{1/2}$.
Note that the term $x_2-Q^{1/2}$ cancels with the numerator $x_2/x_1-1$ 
in the function $\omega$, 
so that the pole does not exist at $x_2=Q$.
To each case we associate the pair of partitions $((1),(1))$, 
$((2),\emptyset)$, $((1,1),\emptyset)$.
One can show that each integral coincides with $Z_{(1),(1)}$, 
$Z_{(2),\emptyset}$, $Z_{(1,1),\emptyset}$ by direct calculations.
If we choose $Q^{-1/2}$ instead, then the possible choice of the poles 
with respect to $x_2$ are $Q^{1/2}$, $q Q^{-1/2}$ and $t^{-1}Q^{-1/2}$.
To each case we associate the pair of partitions $((1),(1))$, 
$(\emptyset,(2))$, $(\emptyset,(1,1))$.
One can show that each integral coincides with $Z_{(1),(1)}$, 
$Z_{\emptyset,(2)}$, $Z_{\emptyset,(1,1)}$.
We also have the choices 
$(x_1,x_2)=(q^{1/2} Q^{1/2}, Q^{1/2})$, $(t^{-1/2} Q^{1/2}, Q^{1/2})$, 
$(q^{1/2} Q^{-1/2}, Q^{-1/2})$, $(t^{-1/2} Q^{1/2}, Q^{1/2})$.
They yield $Z_{(2),\emptyset}$, $Z_{(1,1),\emptyset}$, $Z_{\emptyset,(2)}$, $Z_{\emptyset,(1,1)}$ respectively.
Noting the combinatorial factor $1/2!$ in the integral expression,
we find that in the case $n=2$ 
the right hand side of \eqref{eq:int} is $Z_2(Q,q,t)$.
\end{eg}

Now we start the proof.
\begin{proof}[{Proof of Proposition \ref{prp:int}}] 
Let us assume $n\ge 2$.
By the choice of contours, 
the poles of the integral are located at 
\begin{align}
\label{eq:specialization}
 \{x_1,x_2,\ldots,x_n\}
=&\{Q^{1/2} q^{j-1} t^{-(i-1)} 
    \mid 1\le i\le\ell(\lambda), 1\le j \le \lambda_i \}
\\ 
\nonumber
 &\bigcup 
  \{Q^{-1/2} q^{i-1} t^{-(j-1)} \mid 1\le i\le\ell(\mu), 1\le j \le \mu_i \}
\end{align}
with some pair of partitions $(\lambda,\mu)$ such that $|\lambda|+|\mu|=n$.
Let us associate this pair $(\lambda,\mu)$ 
to the choice of poles \eqref{eq:specialization}.

We will show the residue at the pole parametrized by $(\lambda,\mu)$ 
coincides with $Z_{\lambda,\mu}$.
The proof will be done by the induction on $n=|\lambda|+|\mu|$.
The case $\lambda=\mu=\emptyset$ is trivial.
For the general $\lambda$ and $\mu$, 
let us set $L\seteq\ell(\lambda)$, $\ell\seteq\ell(\mu)$, 
$B\seteq\lambda_\ell$, $b\seteq\mu_\ell$, $m\seteq|\lambda|$.
We may fix the ordering in \eqref{eq:specialization} such that 
\begin{align*}
\begin{array}{ll}
 x_{i+\sum_{a=1}^{j-1}\lambda_a}=Q^{1/2} q^{j-1} t^{-(i-1)}
&(1\le i\le  \ell(\lambda)=L,\ 1\le j\le \lambda_i),\\
 x_{m+j+\sum_{a=1}^{j-1}\mu_a}=Q^{-1/2} q^{j-1} t^{-(i-1)}
&(1\le i\le  \ell(\mu)=\ell, \ 1\le j\le \mu_i).
\end{array}
\end{align*}
Then we can remove the factor $(n!)^{-1}$ from the integral.
Let us define 
\begin{align}
\nonumber
&I_{\lambda,\mu}(Q,q,t)\seteq
\Big[\dfrac{1-q t^{-1}}{(1-q)(1-t^{-1})}\Big]^n\times
\\
\nonumber
&\Res_{x_{n}=Q^{-1/2}q^{b-1}t^{-(\ell-1)}}
 \cdots
 \Res_{x_{m+j+\sum_{a=1}^{i-1}\mu_a}=Q^{-1/2} q^{j-1} t^{-(i-1)}}
 \cdots
 \Res_{x_{m+1}=Q^{-1/2}}
\\
\nonumber
&\cdot
 \Res_{x_{m}=Q^{1/2}q^{B-1}t^{-(L-1)}}
 \cdots
 \Res_{x_{j+\sum_{a=1}^{i-1}\lambda_a}=Q^{1/2} q^{j-1} t^{-(i-1)}}
 \cdots
 \Res_{x_{1}=Q^{1/2}}
 \Big[
\\
\label{eq:prp:int:I}
&\prod_{k=1}^n P(x_k;Q^{1/2}) \prod_{1\le i<j\le n} \omega(x_j/x_i)\Big].
\end{align}
Here and afterwards in this proof we use abbreviations
\begin{align*}
P(x;Q)\seteq P(x;Q,q^{-1} t), \quad 
\omega(y)\seteq \omega(y;q,t^{-1},q^{-1} t).
\end{align*}
Then $I_{\lambda,\mu}(Q,q,t)$ satisfies the relation
\begin{align}
\label{eq:prp:int:duality}
I_{\lambda,\mu}(Q,q,t)=I_{\mu,\lambda}(Q^{-1},q,t),
\end{align}
which follows from the definition of the rational functions $P$ and $\omega$.
Note that this is the same as \eqref{eq:Z:duality}.

Now what we must proof is 
\begin{align}\label{eq:prp:int:csq}
Z_{\lambda,\mu}(Q,q,t)=I_{\lambda,\mu}(Q,q,t).
\end{align}
Let us also define $\widetilde{\mu}$ to be the partition made from $\mu$ 
by moving away the last one box. 
In other words we have
\begin{align*}
\widetilde{\mu}\seteq(\mu_1,\ldots,\mu_{\ell-1},b-1).
\end{align*}
We will show that 
\begin{align}
\label{eq:prp:int:rat}
Z_{\lambda,\mu}(Q,q,t)/Z_{\lambda,\widetilde{\mu}}(Q,q,t)
=I_{\lambda,\mu}(Q,q,t)/I_{\lambda,\widetilde{\mu}}(Q,q,t).
\end{align}
Then by repeating the use of \eqref{eq:prp:int:rat},
the desired consequence \eqref{eq:prp:int:csq} reduces to 
$Z_{\lambda,\emptyset}(Q,q,t)=I_{\lambda,\emptyset}(Q,q,t)$,
and by the relations \eqref{eq:Z:duality} and \eqref{eq:prp:int:duality},
it reduces to $Z_{\emptyset,\lambda}(Q,q,t)=I_{\emptyset,\lambda}(Q,q,t)$,
which again by \eqref{eq:prp:int:rat} reduces to the trivial case 
$Z_{\emptyset,\emptyset}(Q,q,t)=I_{\emptyset,\emptyset}(Q,q,t)$.

For the proof of \eqref{eq:prp:int:rat}, we set
\begin{align}
\label{eq:prp:int:cnv}
&\lambda=(\overbrace{m_1,\cdots,m_1}^{n_1},\overbrace{m_2,\cdots,m_2}^{n_2},
          \cdots,\overbrace{m_r,\cdots,m_r}^{n_r}),
\\
\nonumber
&\mu=(\overbrace{\widetilde{m}_1,\cdots,\widetilde{m}_1}^{\widetilde{n}_1},
      \overbrace{\widetilde{m}_2,\cdots,\widetilde{m}_2}^{\widetilde{n}_2},
      \cdots,
      \overbrace{\widetilde{m}_r,\cdots,\widetilde{m}_s}^{\widetilde{n}_s}).
\end{align}
Thus we have $L=n_1+\cdots+n_r$, 
$\ell=\widetilde{n}_1+\cdots+\widetilde{n}_s$, 
$B=m_r$, $b=\widetilde{m}_s$.
We also define the integer $k$ by the condition 
\begin{align*}
n_1+\cdots+n_{k-1}<\ell\le n_1+\cdots+n_k
\end{align*}
if $\ell \le L$. Otherwise we set $k\seteq r+1$.
In the following calculation we set 
$n_0=\widetilde{n}_0=m_{r+1}=\widetilde{m}_{s+1}\seteq0$.

First we treat $Z_{\lambda,\mu}/Z_{\lambda,\widetilde{\mu}}$.
By \eqref{eq:N} and the direct calculation we have
\begin{align*}
&
\dfrac{N_{\lambda,\mu}(Q,q,t)}{N_{\lambda,\widetilde{\mu}}(Q,q,t)}  
\\
&=(1-Q q^{a_\lambda(b,\ell)}t)
 \Big[
 \prod_{\substack{\square\in\lambda \\ i(\square)=\ell}}
 \dfrac{1-Q q^{-a_{\widetilde{\mu}}(\square)-2}t^{-l_\lambda(\square)}}
       {1-Q q^{-a_{\widetilde{\mu}}(\square)-1}t^{-l_\lambda(\square)}}
 \Big]
\\
&\times
 \Big[
 \prod_{\substack{\square\in\mu \\ j(\square)=b}}
 \dfrac{1-Q q^{a_{\widetilde{\mu}}(\square)}t^{-l_\lambda(\square)+2}}
       {1-Q q^{a_{\widetilde{\mu}}(\square)}t^{-l_\lambda(\square)+1}}
 \Big]
\\
&=(1-Q^{a_\lambda(b,\ell)}t)
  \Big[
  \prod_{i=k}^{r}
  \dfrac{1-Q q^{m_i-b}    t^{\ell-(n_1+\cdots+n_i)}}
        {1-Q q^{m_{i+1}-b}t^{\ell-(n_1+\cdots+n_i)}}
  \Big]
\\
&\times
  \Big[
  \prod_{i=1}^{k-1}
  \dfrac{1-Q q^{m_i-b} t^{\ell-(n_1+\cdots+n_{i-1})}}
        {1-Q q^{m_i-b} t^{\ell-(n_1+\cdots+n_i)}}
  \Big]
  \dfrac{1-Q q^{m_k-b} t^{\ell-(n_1+\cdots+n_{k-1})}}{1-Q q^{m_k-b}t}
\\
&=(1-Q q^{-b}t^{\ell-L})
  \prod_{i=1}^{r}
  \dfrac{1-Q q^{m_i-b} t^{\ell-(n_1+\cdots+n_{i-1})}}
        {1-Q q^{m_i-b} t^{\ell-(n_1+\cdots+n_{i})}}.
\end{align*}
In the second equality we used the convention \eqref{eq:prp:int:cnv}.
Similar treatments yield
\begin{align*}
\dfrac{N_{\mu,\lambda}(Q^{-1},q,t)}{N_{\widetilde{\mu},\lambda}(Q^{-1},q,t)}
=&(1-Q^{-1}q^{b-1}t^{L-\ell+1})
\\
&\times
  \prod_{i=1}^{r}
  \dfrac{1-Q^{-1} q^{b-1-m_i} t^{(n_1+\cdots+n_{i-1})-\ell+1}}
        {1-Q^{-1} q^{b-1-m_i} t^{(n_1+\cdots+n_{i})  -\ell+1}},
\\
\dfrac{N_{\mu,\mu}(1,q,t)}{N_{\widetilde{\mu},\widetilde{\mu}}(1,q,t)}
=&(1-q^{-b})(1-q^{b-1}t)
\Big[\prod_{i=1}^{s} 
\dfrac{1-q^{b-\widetilde{m}_i-1}
       t^{(\widetilde{n}_1+\cdots+\widetilde{n}_{i-1})-\ell+1}}
      {1-q^{b-\widetilde{m}_i-1}
       t^{(\widetilde{n}_1+\cdots+\widetilde{n}_{i})-\ell+1}}
\Big]
\\
&\times
\Big[\prod_{i=1}^{s} 
\dfrac{1-q^{\widetilde{m}_i-b}
       t^{\ell-(\widetilde{n}_1+\cdots+\widetilde{n}_{i-1})}}
      {1-q^{\widetilde{m}_i-b}
       t^{\ell-(\widetilde{n}_1+\cdots+\widetilde{n}_{i})}}
\Big]
\end{align*}
Thus by \eqref{eq:z:lm} and by some elementary calculation we have
\begin{align}
\nonumber
&\dfrac{Z_{\lambda,\mu}(Q,q,t)}{Z_{\lambda,\widetilde{\mu}}(Q,q,t)}
=\dfrac{Q q^{2(1-b)}t^{2(\ell-1)}}
 {(1-Q^{-1}q^{b-1}t^{L-\ell+1})(1-Q q^{a_\lambda(b,\ell)}t)
  (1-q^{-b})(1-q^{b-1}t)}
\\
\nonumber
&\times
 \Big[
 \prod_{i=1}^{r}
 \dfrac{1-Q q^{m_i-b} t^{\ell-(n_1+\cdots+n_{i})}}
       {1-Q q^{m_i-b} t^{\ell-(n_1+\cdots+n_{i-1})}}
 \Big]
 \Big[
  \prod_{i=1}^{r}
  \dfrac{1-Q q^{m_i-b+1} t^{\ell-(n_1+\cdots+n_{i})  -1}}
        {1-Q q^{m_i-b+1} t^{\ell-(n_1+\cdots+n_{i-1})-1}}        
 \Big]
\\
\nonumber
&\times
\Big[\prod_{i=1}^{s} 
\dfrac{1-q^{\widetilde{m}_i-b+1}
       t^{\ell-(\widetilde{n}_1+\cdots+\widetilde{n}_{i})-1}}
      {1-q^{\widetilde{m}_i-b+1}
       t^{\ell-(\widetilde{n}_1+\cdots+\widetilde{n}_{i-1})-1}}
\Big]
\Big[\prod_{i=1}^{s-1} 
\dfrac{1-q^{\widetilde{m}_i-b}
       t^{\ell-(\widetilde{n}_1+\cdots+\widetilde{n}_{i})}}
      {1-q^{\widetilde{m}_i-b}
       t^{\ell-(\widetilde{n}_1+\cdots+\widetilde{n}_{i-1})}}
\Big]
\\
\label{eq:prp:int:prp:z}
&\times
 \dfrac{1-t^{1}}
       {1-t^{\ell-(\widetilde{n}_1+\cdots+\widetilde{n}_{s-1})}}.
\end{align}

Next we calculate $I_{\lambda,\mu}/I_{\lambda,\widetilde{\mu}}$.
By the definition \eqref{eq:prp:int:I} we have
\begin{align}
\nonumber
&\dfrac{I_{\lambda,\mu}(Q,q,t)}{I_{\lambda,\widetilde{\mu}}(Q,q,t)}
=\dfrac{1-q t^{-1}}{(1-q)(1-t^{-1})}\times
\\
\nonumber
&P(Q^{-1/2}q^{b-1}t^{-\ell-1};Q^{1/2},q^{-1}t)\times
Q^{-1/2}q^{b-1}t^{-\ell-1}\times
\\
\nonumber
&
\Big[\prod_{(i,j)\in\lambda}
\omega(\dfrac{Q^{-1/2}q^{b-1}t^{-(\ell-1)}}{Q^{1/2}q^{j-1}t^{-(i-1)}}
      ;q,t^{-1},q^{-1}t)\Big]\times
\\
&\Big[\prod_{\substack{(i,j)\in\widetilde{\mu} \\ (i,j)\neq (\ell,b-1)}}
\omega(\dfrac{Q^{-1/2}q^{b-1}t^{-(\ell-1)}}{Q^{-1/2}q^{j-1}t^{-(i-1)}}
      ;q,t^{-1},q^{-1}t)\Big]
\dfrac{(q-1)^2(q-q t^{-1})(q-q^{-1}t)}{(q-q^{-1})(q-t)(q-t^{-1})}.
\label{eq:prp:int:I:prp}
\end{align}
An elementary calculation yields
\begin{align*}
&\prod_{(i,j)\in\lambda}
\omega(\dfrac{Q^{-1/2}q^{b-1}t^{-(\ell-1)}}{Q^{1/2}q^{j-1}t^{-(i-1)}}
      ;q,t^{-1},q^{-1}t)
\\
&=\dfrac{1-Q q^{1-b}t^{\ell-1}}{1-Q^{1-b}t^{\ell-L-1}}
  \dfrac{1-Q q^{-b}t^{\ell}}{1-Q^{-b}t^{\ell-L}}
\\
&\times
\Big[
 \prod_{i=1}^{L}
 \dfrac{1-Q q^{\lambda_i+1-b} t^{\ell-i-1}}{1-Q q^{\lambda_i+1-b} t^{\ell-i}}
\Big]
\Big[
 \prod_{i=1}^{L}
 \dfrac{1-Q q^{\lambda_i-b} t^{\ell-i}}{1-Q q^{\lambda_i-b} t^{\ell-i+1}}
\Big]
\\
&=\dfrac{1-Q q^{1-b}t^{\ell-1}}{1-Q^{1-b}t^{\ell-L-1}}
  \dfrac{1-Q q^{-b}t^{\ell}}{1-Q^{-b}t^{\ell-L}}
\\
&
\times
 \Big[
 \prod_{i=1}^{r}
 \dfrac{1-Q q^{m_i-b} t^{\ell-(n_1+\cdots+n_{i})}}
       {1-Q q^{m_i-b} t^{\ell-(n_1+\cdots+n_{i-1})}}
 \Big]
 \Big[
  \prod_{i=1}^{r}
  \dfrac{1-Q q^{m_i-b+1} t^{\ell-(n_1+\cdots+n_{i})  -1}}
        {1-Q q^{m_i-b+1} t^{\ell-(n_1+\cdots+n_{i-1})-1}}        
 \Big],
\end{align*}
where at the second equality we used the convention \eqref{eq:prp:int:cnv}.
A similar consideration gives
\begin{align*}
&\Big[\prod_{\substack{(i,j)\in\widetilde{\mu} \\ (i,j)\neq (\ell,b-1)}}
\omega(\dfrac{Q^{-1/2}q^{b-1}t^{-(\ell-1)}}{Q^{-1/2}q^{j-1}t^{-(i-1)}}
      ;q,t^{-1},q^{-1}t)\Big]
\dfrac{(q-1)^2(q-q t^{-1})(q-q^{-1}t)}{(q-q^{-1})(q-t)(q-t^{-1})}
\\
&=
\dfrac{1-q^{1-b}t^{\ell-1}}{1-q^{1-b}t^{-1}}
\dfrac{1-q^{-b}t^{\ell}}{1-q^{-b}}
\\
&
\times
\Big[\prod_{i=1}^{s} 
\dfrac{1-q^{\widetilde{m}_i-b+1}
       t^{\ell-(\widetilde{n}_1+\cdots+\widetilde{n}_{i})-1}}
      {1-q^{\widetilde{m}_i-b+1}
       t^{\ell-(\widetilde{n}_1+\cdots+\widetilde{n}_{i-1})-1}}
\Big]
\Big[\prod_{i=1}^{s-1} 
\dfrac{1-q^{\widetilde{m}_i-b}
       t^{\ell-(\widetilde{n}_1+\cdots+\widetilde{n}_{i})}}
      {1-q^{\widetilde{m}_i-b}
       t^{\ell-(\widetilde{n}_1+\cdots+\widetilde{n}_{i-1})}}
\Big]
\\
&\times
\dfrac{1-t}
      {1-t^{\ell-(\widetilde{n}_1+\cdots+\widetilde{n}_{s-1})}}.
\end{align*}
Substituting these terms, 
one finds that \eqref{eq:prp:int:I:prp} is equal to \eqref{eq:prp:int:prp:z}.
This is the desired consequence.
\end{proof}

\begin{rmk}
In the above integral expression one may find a reminiscence 
of the Feigin-Odesskii algebra with three parameters,
which is investigated 
in \cite{FHHSY:2009,FT:2009,SV:2009}. 
Note also that the specialization \eqref{eq:specialization} 
also appeared in the Gordon filtration introduced in \cite{FHHSY:2009}.
\end{rmk}

\section{The recursion relation of $Z(\Lambda,Q,q,t)$}\label{sect:rec}

The main theorem of this paper is 
\begin{thm}\label{thm:main}
The five-dimensional Nekrasov partition function 
\[Z(\Lambda,Q,q,t)=\sum_{n=0}^\infty (\Lambda^{4}t/q)^{n}Z_n(Q,q,t)\]
satisfies the same recursive formula \eqref{eq:rec} 
as in Conjecture \ref{cnj:main}:
\begin{align*}
Z_n(Q,q,t)
=\delta_{n,0}+\sum_{\substack{r,s\in\bbZ, \\ 1\le r s\le n}}
 \dfrac{G(r,s;q,t)Z_{n-r s}(q^r t^{s},q,t)}{Q-q^r t^{-s}}.
\end{align*}
\end{thm}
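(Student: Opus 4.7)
The plan is to analyze $Z_n(Q,q,t)$ as a rational function of $Q$ using the integral expression of Proposition \ref{prp:int}, identify all its $Q$-poles, and match the residues with the right-hand side of the claimed recursion. Since both sides approach $\delta_{n,0}$ as $Q\to\infty$ (on the combinatorial side, $Z_{\lambda,\mu}(Q,q,t)\to 0$ unless $\lambda=\mu=\emptyset$ because the denominator $N_{\lambda,\mu}(Q)N_{\mu,\lambda}(Q^{-1})$ has net positive degree in $Q$ as soon as $|\lambda|+|\mu|>0$), it suffices to prove that $Z_n(Q,q,t)$ has only simple poles at $Q=q^r t^{-s}$ with $1\le rs\le n$ and that its residue there equals $G(r,s;q,t)\,Z_{n-rs}(q^r t^s,q,t)$.

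Next, I would locate the poles via the integral expression \eqref{eq:int}. The $Q$-dependence of the integrand is carried entirely by the factors $P(x_k;Q^{1/2},q^{-1}t)$, whose poles in $x_k$ sit at $Q^{\pm 1/2}$ and $q^{-1}t\,Q^{\pm 1/2}$. The contour prescription forces each $x_k$ to be evaluated at a chain of poles of the form $Q^{\pm 1/2}q^{j-1}t^{-(i-1)}$, producing the specialization \eqref{eq:specialization} indexed by pairs $(\lambda,\mu)$. A new $Q$-dependent pole can appear only when a pole from the $\lambda$-tower collides with one from the $\mu$-tower, namely when $Q^{1/2}q^{j-1}t^{-(i-1)}=Q^{-1/2}q^{j'-1}t^{-(i'-1)}$, which forces $Q=q^{j'-j}t^{-(i'-i)}=q^r t^{-s}$. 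The bound $1\le rs\le n$ follows because each such collision consumes at least one box from each of $\lambda$ and $\mu$, forced by the contour rules to fill an $|r|\times|s|$ rectangle of specializations.

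To compute the residue at $Q=q^r t^{-s}$ with $r,s>0$, I would isolate the cluster of $rs$ integration variables whose poles participate in the coalescence. The $rs$ box positions inside the $r\times s$ rectangle furnish a Jacobian which, after cancellations with the numerator zeros of the $\omega$-factors at $y=1$ (double) and $y=q^{\pm 1}t^{\mp 1}$, collapses to the product $\prod_{(i,j)\neq(0,0)}(1-q^i t^{-j})^{-1}$ entering \eqref{eq:g}; the residue ordering together with the factor $x_k$ in the numerator of $P$ produces the overall coefficient $-\sgn(r)\,q^r t^{-s}$, so the cluster contributes exactly $G(r,s;q,t)$. Simultaneously, the remaining $n-rs$ integration variables, after setting $Q=q^r t^{-s}$ and carrying out the residue, satisfy the same contour prescription as the integral for $Z_{n-rs}$ but with the parameter $Q$ replaced by $q^r t^s$: the sign flip in the exponent of $t$ arises because the residue identification swaps the roles of the $Q^{1/2}$-tower and the $Q^{-1/2}$-tower in what remains of the $\lambda$ and $\mu$ combinatorics. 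The cases $r,s<0$ follow from the duality \eqref{eq:Z:duality} applied pole-by-pole, consistent with the $\sgn(r)$ prefactor in \eqref{eq:g}.

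The hard part will be the combinatorial bookkeeping inside the residue calculation: one must check that the product of $\omega$-factors evaluated at the rectangular specialization multiplies exactly into the claimed $G(r,s;q,t)$, and that no spurious poles in the remaining variables spoil the identification with $Z_{n-rs}(q^r t^s,q,t)$. This bookkeeping is similar in spirit to the arm/leg ratio manipulations carried out in the proof of Proposition \ref{prp:int}, but significantly more involved, since the rectangular cluster of $rs$ pinched variables couples non-trivially to the $n-rs$ remaining variables through the $\omega$-factor structure, and all these couplings must telescope correctly in order for the remaining integrand to assemble into the integral representation of $Z_{n-rs}$ at the shifted parameter.
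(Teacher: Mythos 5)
Your proposal follows essentially the same route as the paper: the paper likewise uses the integral expression to show (Lemma \ref{lem:I} and Proposition \ref{prp:rec:1}) that all $Q$-poles of $Z_n$ are simple and sit at $q^r t^{-s}$ with $1\le rs\le n$, and then (Proposition \ref{prp:rec:2}) isolates the $r\times s$ rectangular cluster of pinched variables, telescopes the $\omega$-factors so that $P(x;Q^{1/2},q^{-1}t)\prod\omega$ becomes $P(x;(q^rt^s)^{1/2},q^{-1}t)$ at $Q=q^rt^{-s}$, and identifies the cluster contribution with $\Res_{Q=q^rt^{-s}}Z_{\emptyset,(r^s)}=G(r,s;q,t)$. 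The bookkeeping you flag as the hard part is exactly what the paper carries out, and your explicit partial-fraction assembly (matching poles, residues, and the vanishing at $Q\to\infty$) correctly supplies the final step the paper leaves implicit.
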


The proof is divided in two steps 
(Propositions \ref{prp:rec:1} and \ref{prp:rec:2}).
We first show that each pole of $Z_n$ is simple when $Z_n$ is viewed as a 
rational function of the variable $Q$ .

\begin{prp}\label{prp:rec:1}
As a rational function of the variable $Q$,
$Z_n(Q,q,t)$ has poles at $Q=q^r t^{-s}$ ($r,s\in\bbZ$, $1\le r s \le n$).
Moreover each pole is simple. 
\end{prp}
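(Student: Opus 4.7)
The plan is to exploit the integral representation from Proposition \ref{prp:int}. Since the $Q$-dependence of the integrand is entirely concentrated in the factors $P(x_k; Q^{1/2}, q^{-1}t)$, whose only $x_k$-poles are the two enclosed ones at $Q^{\pm 1/2}$ and the two excluded ones at $q^{-1}t Q^{\pm 1/2}$, the contour integral $Z_n(Q,q,t)$ defines an analytic function of $Q$ away from those special values where the contour is pinched between an enclosed pole and an excluded one.

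First I would enumerate the pinching configurations. The primitive pinchings come from $P$ itself: $Q^{1/2} = q^{-1}t Q^{-1/2}$ yields $Q = q^{-1}t$, and $Q^{-1/2} = q^{-1}t Q^{1/2}$ yields $Q = qt^{-1}$, i.e. $(r,s) = \pm(1,1)$. The remaining pinchings arise through the $\omega$-factors: after partially evaluating the nested residues parametrized (as in the proof of Proposition \ref{prp:int}) by a pair of partitions $(\lambda,\mu)$, each fixed variable takes the form $x_j = Q^{\epsilon/2} q^{j'-1} t^{-(i'-1)}$ with $\epsilon = \pm 1$ according as $(i',j')\in\lambda$ or $\mu$; the $\omega$-generated enclosed poles $x_j q$ and $x_j t^{-1}$ then collide with the opposite-parity excluded poles $q^{-1}t Q^{\mp 1/2}$ (or with excluded $\omega$-poles $x_k q^{-1}, x_k t$ from another fixed variable), producing pinchings at $Q = q^r t^{-s}$ with $(r,s)$ determined by the positions of the boxes.

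Next, to bound $rs$, I would inspect the denominator $N_{\lambda,\mu}(Q) N_{\mu,\lambda}(Q^{-1})$ of $Z_{\lambda,\mu}$ in \eqref{eq:z:lm}. Each zero at $Q = q^r t^{-s}$ with $rs\ge 1$ is indexed by a box $(i,j)$ of $\lambda$ or $\mu$, and a rectangle argument (the product $rs$ equals the cardinality of a rectangle of boxes lying entirely inside the partition) gives $rs \le |\lambda| \le n$ or $rs \le |\mu| \le n$. Simplicity of each pole follows because each pinching is a collision of two distinct simple poles of the integrand, and the double zero of $\omega$ at $y=1$ removes the only mechanism that could fuse residue points of opposite $Q$-parity into a higher-order singularity.

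The main obstacle is the cancellation of extraneous poles. Individual $Z_{\lambda,\mu}$ do exhibit poles at $Q = q^r t^{-s}$ with $rs \le 0$ (notably at $Q=1$, as is already visible in the $n=1$ example worked out in the paper), and these have to cancel in the sum $Z_n = \sum Z_{\lambda,\mu}$. Similarly, apparent coincidences of distinct factors within a single $Z_{\lambda,\mu}$ that would give higher-order poles at allowed $(r,s)$ must reduce to simple poles after summation. These cancellations are forced by the integral representation (which by construction has only the pinching singularities described above), but making them combinatorially explicit---say by a direct pairing of $(\lambda,\mu)$ configurations, or by computing the residue of the integral at $Q = q^r t^{-s}$ via a deformed contour---is the delicate step in turning the above sketch into a rigorous proof.
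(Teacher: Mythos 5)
Your overall strategy coincides with the paper's: use the integral representation of Proposition \ref{prp:int}, locate the $Q$-singularities as pinches of the contours between the enclosed poles $x_i=Q^{\pm1/2}q^{\alpha}t^{-\beta}$ and the excluded poles $x_i=q^{-1}tQ^{\mp1/2}$ coming from the numerator factors $P$, argue simplicity from the collision of two simple poles, and read off which $(r,s)$ with $1\le rs\le n$ actually occur from the combinatorial definition \eqref{eq:nek}. The skeleton is right, and your bound on $rs$ via the rectangle contained in $\mu$ matches what the paper uses.

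The gap is the step you flag at the end, and it is not merely a matter of making things ``combinatorially explicit.'' Your justification that the extraneous poles cancel and that the surviving poles are simple is that the cancellations are ``forced by the integral representation (which by construction has only the pinching singularities described above)'' --- but that parenthetical is precisely what has to be proved. The contours are not fixed cycles independent of $Q$ and of the other variables; they are prescribed by which poles they enclose, so the integral is an iterated residue sum, and each residue evaluation injects new apparent singularities into the remaining integrand, at $x_k=x_i$, $x_k=x_iqt$ and $x_k=x_iq^{-1}t^{-1}$. If any of these survived, subsequent integrations could produce higher-order poles in $Q$ or poles at points not on your list. The paper's Lemma \ref{lem:I} is exactly the missing ingredient: it writes $C_n=C_n^{a}-\sum_k C_{n,k}^{d}$, observes that the $C_n^{a}$ piece is harmless, and then checks by explicit computation that in the $C_{n,k}^{d}$ contributions the spurious pole at $x_k=x_i$ dies by symmetry of the numerator while the poles at $x_k=x_iqt$ and $x_k=x_iq^{-1}t^{-1}$ cancel between the $x_kq^{-1}$-residue and the $x_it$-residue (the vanishing of the last factor in the evaluation of $H$ at $x_k=x_iqt$). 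Only after this does one know that the reduced integrand has the clean form \eqref{eq:lem:I:2} with pairwise distinct linear factors, which is what yields both the holomorphy statement of the lemma and, for $Z_n$, the simplicity of the poles (together with genericity of $q,t$). Your appeal to the double zero of $\omega$ at $y=1$ handles only the $x_k=x_i$ collision; it says nothing about $x_k=x_iqt$ and $x_k=x_iq^{-1}t^{-1}$, which is where the actual work lies. To complete the argument you would need to supply this cancellation computation or an equivalent substitute.
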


Before starting the proof of this proposition, we prepare the next lemma, 
which is inspired by \cite[\S A]{FL:2010} and \cite[Appendix]{HJS:2010b}.

\begin{lem}\label{lem:I}
Let $f(x_1,\ldots,x_n)$ be a symmetric polynomial function 
of all its arguments.
Assume that the integral
\begin{align}
\label{eq:I}
&I_n(Q,q,t)=
\oint_{C_n}\dfrac{d x_n}{2\pi\sqrt{-1}}\cdots
 \oint_{C_1}\dfrac{d x_1}{2\pi\sqrt{-1}} 
 \Bigg(
 \dfrac{f(x_1,\ldots,x_n)}{ \prod_{k=1}^n (x_k-Q^{1/2})(x_k-Q^{-1/2})}
\\
\nonumber
&\times
 \prod_{1\le i<j\le n}
 \dfrac{(x_j/x_i-1)^2(x_j/x_i-q t^{-1})(x_j/x_i-q^{-1} t)}
        {(x_j/x_i-q)(x_j/x_i-q^{-1})(x_j/x_i-t)(x_j/x_i-t^{-1})}
 \Bigg)
\end{align}
converges, 
where the contour $C_k$ is chosen so that it surrounds only the poles at 
$Q^{1/2}$, $Q^{-1/2}$, $x_j q$ and $x_j t^{-1}$ for $j \neq k$,
Then $I_n(Q,q,t)$ is a holomorphic function of $Q$.
\end{lem}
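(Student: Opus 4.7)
The integrand depends on $Q$ only through the denominator factors $(x_k - Q^{1/2})(x_k - Q^{-1/2})$ and through the contours $C_k$, which surround the $Q$-moving poles $Q^{\pm 1/2}$ together with the $Q$-independent poles at $x_j q$ and $x_j t^{-1}$. My plan is to extract the $Q$-dependence in a form that makes holomorphy manifest, exploiting the symmetry of $f$ together with a symmetry of the $\omega$-factors.

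A key preliminary observation is that $\omega(y;q,t^{-1},q^{-1}t)$ is invariant under $y \mapsto y^{-1}$: a direct computation from the explicit form shows that the substitution produces a factor of $y^{-4}$ in both the numerator and the denominator, which cancel. Consequently $\prod_{i<j} \omega(x_j/x_i;q,t^{-1},q^{-1}t)$ is symmetric in $x_1,\dots,x_n$, and combined with the symmetry of $f$, the entire numerator of the integrand is a symmetric function of the $x_k$'s.

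Next I would apply the partial fraction identity
\[
\frac{1}{(x - Q^{1/2})(x - Q^{-1/2})} = \frac{1}{Q^{1/2} - Q^{-1/2}}\Bigl(\frac{1}{x - Q^{1/2}} - \frac{1}{x - Q^{-1/2}}\Bigr)
\]
to each of the $n$ denominator factors and expand, yielding $2^n$ terms indexed by subsets $S \subset \{1,\dots,n\}$ with an overall prefactor $(Q^{1/2} - Q^{-1/2})^{-n}$. By the symmetry of the remaining numerator, the term corresponding to $S$ depends only on $|S|$, so the sum collapses to $\sum_{k=0}^n (-1)^{n-k}\binom{n}{k} J_k(Q)$ for auxiliary integrals $J_k$ in which $k$ variables are assigned to the $Q^{1/2}$-pole and $n-k$ to the $Q^{-1/2}$-pole. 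At $Q = 1$ the two pole positions coincide, so $J_k(Q)|_{Q=1}$ is independent of $k$; since $\sum_k (-1)^{n-k}\binom{n}{k}$ is the $n$-fold finite-difference operator, annihilating polynomials in $k$ of degree less than $n$, one produces sufficient vanishing of the sum at $Q = 1$ to cancel the prefactor singularity there.

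For the remaining potential singularities at $Q = q^r t^{-s}$ with $rs \geq 1$ — arising when the iterated residue forces two specialized $x$'s to collide at ratio $q^r t^{-s}$, triggering a pole from the denominators of $\omega$ — I would pair residue configurations that differ by exchanging a variable between the $Q^{1/2}$-branch and the $Q^{-1/2}$-branch: the $(y-1)^2$ zero of $\omega$ together with the symmetry of $f$ should make the paired residues cancel. The main obstacle I anticipate is making this cancellation fully explicit and uniform in $(r,s)$; the combinatorics of the iterated residue tree (cf.\ the proof of Proposition \ref{prp:int}) require careful bookkeeping. A cleaner complementary approach worth trying is direct contour deformation: for $Q$ in a small punctured neighbourhood of $Q_0 = q^r t^{-s}$, verify that the $Q$-moving poles do not pinch against the $Q$-independent poles on the contour, so that the contour can be deformed to a $Q$-independent one, exhibiting $I_n$ as holomorphic near $Q_0$.
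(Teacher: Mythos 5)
Your preliminary observations are sound: $\omega(y;q,t^{-1},q^{-1}t)$ is indeed invariant under $y\mapsto y^{-1}$, so the numerator of the integrand is symmetric in the $x_k$'s, and the only mechanism that can produce a singularity in $Q$ is a pinch of the contour between poles lying on opposite sides of it. But your second step attacks a non-issue: both points $Q^{1/2}$ and $Q^{-1/2}$ lie \emph{inside} every contour $C_k$, so their collision at $Q=\pm1$ cannot pinch anything and the integral is automatically regular there (push the contour out to a large $Q$-independent circle). The partial-fraction decomposition manufactures an artificial $(Q^{1/2}-Q^{-1/2})^{-n}$ prefactor and then has to cancel an order-$n$ pole; knowing only that $J_k(1)$ is independent of $k$ gives one order of vanishing of the alternating sum, and the finite-difference argument would require showing that $J_k(Q)$ agrees with a polynomial in $k$ of degree $<n$ up to $O\bigl((Q^{1/2}-Q^{-1/2})^n\bigr)$ corrections, which you do not establish. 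So this step is both unnecessary and, as written, incomplete.

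The genuine content of the lemma is precisely the part you defer: the absence of poles at $Q=q^rt^{-s}$, $rs\ge1$. Your primary route (pairing residue configurations and invoking the $(y-1)^2$ zeros) is only a hope, and your ``complementary approach'' --- verify that the moving poles do not pinch against the fixed ones --- is a restatement of the claim rather than a proof, since after iterated residues the specialized values of the earlier variables become $Q$-dependent and pinches genuinely threaten. The paper's proof carries this out concretely: integrate out $x_n$ first, split $C_n=C_n^a-\sum_k C_{n,k}^d$ where $C_{n,k}^d$ encircles the outside poles $x_kq^{-1}$ and $x_kt$ (the $C_n^a$ piece is harmless), compute those two residues explicitly, and check by a direct cancellation --- using the symmetry of $f$ and the exact coefficients of $\omega$ --- that the apparent new poles at $x_k=x_i$, $x_k=x_iqt$ and $x_k=x_iq^{-1}t^{-1}$ are spurious. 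This shows the surviving pole locations are again of the form $Q^{\pm1/2}q^\alpha t^{-\beta}$, all inside the remaining contours, so the argument closes by induction down to a one-variable integral whose poles are all interior, hence holomorphic in $Q$. Without an argument of this kind (or a fully executed version of your pairing idea), the proof has a gap at its central step.
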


\begin{proof}
Let us integrate over the variable $x_n$ first.
By the choice of $C_n$, 
the poles at $x_n=Q^{\pm1/2}$, $x_n=x_j q$ and $x_n=x_j t^{-1}$ ($j<n$)
are inside the contour,
while the poles at $x_n=x_j q^{-1}$ and $x_n=x_j t$ are outside.

If we vary  $Q$ then some of these poles move.
When two of the poles coincide, 
a higher order singularity of the integrand may occur.
If this happens for the poles on the same side of the contour,
then one can move the contour away from the colliding pair 
and the integral contains no singularity.
If the pair approach from the opposite sides of the contour,
then the integral becomes singular.

It is convenient to express the contour $C_n$ as 
$C_n=C_n^{a}-\sum_{1\le k\le n-1}C_{n,k}^{d}$,
where $C_n^{a}$ is the contour surrounding all possible poles of $x_n$, 
and $C_{n,k}^{d}$ is the contour 
surrounding the point $x_k q^{-1}$ and $x_k t$.
By the observation above, 
the integral over $C_n^{a}$ does not contribute to the singularity.
Thus we only have to consider the integrals over $C_{n,k}^{d}$.
The factors depending on $x_n$ are expressed as follows:
\begin{align*}
&J_n\seteq
\sum_{1\le k\le n-1}
\oint_{-C_{n,k}^d}
\dfrac{d x_n}{2\pi\sqrt{-1}}\Bigg(
\dfrac{f(x_1,\ldots,x_n)}{(x_n-Q^{1/2})(x_n-Q^{-1/2})}
\\
&\times
 \prod_{1\le i\le n-1}
 \dfrac{(x_n/x_i-1)^2(x_n/x_i-q t^{-1})(x_n/x_i-q^{-1}t)}
       {(x_n/x_i-q)(x_n/x_i-q^{-1})(x_n/x_i-t)(x_n/x_i-t^{-1})}
\Bigg)
\\
=&-\sum_{1\le k\le n-1}
\Bigg[\dfrac{f(x_1,\ldots,x_{n-1},x_k q^{-1})}
      {(x_k q^{-1}-Q^{1/2})(x_k q^{-1}-Q^{-1/2})}
\dfrac{(1-q)^2(1-q^2 t^{-1})(1-t)}
      {(1-q^2)(1-q t)(1-q t^{-1})}
\\
&\times
 \prod_{\substack{1\le i\le n-1 \\ i\neq k}}
 \dfrac{(x_k/x_i-q)^2(x_k/x_i-q^2 t^{-1})(x_k/x_i-t)}
       {(x_k/x_i-q^2)(x_k/x_i-1)(x_k/x_i-q t)(x_k/x_i-q t^{-1})}
\\
&+\dfrac{f(x_1,\ldots,x_{n-1},x_k t)}{(x_k t-Q^{1/2})(x_k t-Q^{-1/2})}
  \dfrac{(1-t^{-1})^2(1-q t^{-2})(1-q^{-1})}
        {(1-t^{-2})(1-q t^{-1})(1-q^{-1}t^{-1})}
\\
&\times
 \prod_{\substack{1\le i\le n-1 \\ i\neq k}}
 \dfrac{(x_k/x_i-t^{-1})^2(x_k/x_i-q t^{-2})(x_k/x_i-q^{-1})}
       {(x_k/x_i-q t^{-1})(x_k/x_i-q^{-1}t^{-1})(x_k/x_i-1)(x_k/x_i-t^{-2})}
\Bigg].
\end{align*}
Now we have several apparent poles in the integrand.
Since we have assumed that $f(x_1,\ldots,x_n)$ is symmetric,
we find that $x_k=x_i$ is not a genuine pole.
Moreover two types of poles $x_k=x_i q t$ and $x_k=x_i q^{-1} t^{-1}$
are not poles. 
In fact if we exchange $i$ and $k$ in the second line of $J_n$ 
then the terms containing $x_k=x_i q t$ can be calculated as
\begin{align}
\nonumber
&\dfrac{f(x_1,\ldots,x_{n-1},x_k q^{-1})}
       {(x_k q^{-1}-Q^{1/2})(x_k q^{-1}-Q^{-1/2})}
\dfrac{(1-q)(1-q^2 t^{-1})(1-t)}{(1+q)(1-q t)(1-q t^{-1})}
\\
\nonumber
&\times
 \prod_{\substack{1\le j\le n-1 \\ j\neq k}}
 \dfrac{(x_k/x_j-q)^2(x_k/x_j-q^2 t^{-1})(x_k/x_j-t)}
       {(x_k/x_j-q^2)(x_k/x_j-1)(x_k/x_j-q t)(x_k/x_j-q t^{-1})}
\\
\nonumber
&+\dfrac{f(x_1,\ldots,x_{n-1},x_i t)}{(x_i t-Q^{1/2})(x_i t-Q^{-1/2})}
  \dfrac{(1-t^{-1})(1-q t^{-2})(1-q^{-1})}
        {(1+t^{-1})(1-q t^{-1})(1-q^{-1}t^{-1})}
\\
\nonumber
&\times
 \prod_{\substack{1\le j\le n-1 \\ j\neq k}}
 \dfrac{(x_i/x_j-t^{-1})^2(x_i/x_j-q t^{-2})(x_i/x_j-q^{-1})}
       {(x_i/x_j-q t^{-1})(x_i/x_j-q^{-1}t^{-1})(x_i/x_j-1)(x_i/x_j-t^{-2})}
\\
\label{eq:lem:I:1}
&=\dfrac{(1-q)(1-t)}{(1-q t)(1-q t^{-1})}
  \dfrac{(x_k/x_i-q)(x_k/x_i-t)}{(x_k/x_i-1)(x_k/x_i-q t)} 
  H(x_1,\ldots,x_{n-1},Q,q,t)
\end{align}
where we have introduced
\begin{align*}
&H(x_1,\ldots,x_{n-1},Q,q,t)\seteq
\\
&\dfrac{f(x_1,\ldots,x_{n-1},x_k q^{-1})}
      {(x_k q^{-1}-Q^{1/2})(x_k q^{-1}-Q^{-1/2})}
\dfrac{1-q^2 t^{-1}}{1+q}
\dfrac{(x_k/x_i-q)(x_k/x_i-q^2 t^{-1})}{(x_k/x_i-q^2)(x_k/x_i-q t^{-1})}
\\
&\times
 \prod_{\substack{1\le j\le n-1 \\ j\neq i,k}}
 \dfrac{(x_k/x_j-q)^2(x_k/x_j-q^2 t^{-1})(x_k/x_j-t)}
       {(x_k/x_j-q^2)(x_k/x_j-1)(x_k/x_j-q t)(x_k/x_j-q t^{-1})}
\\
&-\dfrac{f(x_1,\ldots,x_{n-1},x_i t)}{(x_i t-Q^{1/2})(x_i t-Q^{-1/2})}
  \dfrac{1-q t^{-2}}{1+t^{-1}}
  \dfrac{(x_k/x_i-t)(x_k/x_i-q^{-1} t^{2})}{(x_k/x_i-t^2)(x_k/x_i-q^{-1} t}
\\
&\times
  \dfrac{(x_i/x_j-t^{-1})^2(x_i/x_j-q^2 t^{-1})(x_i/x_j-q^{-1})}
        {(x_i/x_j-q t^{-1})(x_i/x_j-1)(x_i/x_j-q^{-1} t^{-1})(x_i/x_j-t^{-2})}.
\end{align*}
Then we can calculate the residue of \eqref{eq:lem:I:1} at $x_k=x_i q t$ as
\begin{align*}
&H(x_1,\ldots,x_{n-1},Q,q,t)|_{x_k=x_i q t}\\
&=f(x_1,\ldots,x_{n-1},x_i t)\\
&\times
 \prod_{\substack{1\le j\le n-1 \\ j\neq i,k}}
 \dfrac{(x_i/x_j-t^{-1})^2(x_i/x_j-q^2 t^{-1})(x_i/x_j-q^{-1})}
        {(x_i/x_j-q t^{-1})(x_i/x_j-1)(x_i/x_j-q^{-1} t^{-1})(x_i/x_j-t^{-2})}
\\
&\times\Bigg(
\dfrac{1-q^2 t^{-1}}{1+q}
\dfrac{(q t-q)(q t-q^2 t^{-1})}{(qt-q^2)(q t-q t^{-1})}
-
\dfrac{1-q t^{-2}}{1+t^{-1}}
\dfrac{(q t-t)(q t-q^{-1} t^{2})}{(q t-t^2)(q t-q^{-1} t)}
\Bigg).
\end{align*}
Since the last line is zero, $J_n$ contains no pole at $x_k=x_i q t$.
The case $x_k=x_i q^{-1} t^{-1}$ is similar and we omit the detail.

Thus after performing the integral over $x_n$ the function $I_n$ 
becomes an integral over $x_1,\ldots,x_{n-1}$ with 
the function $f$ replaced by some holomorphic function.
The parameters $(Q^{\pm1},q,t^{-1})$ are replaced by 
$(Q^{\pm1}q^i t^{-j},q^a,t^{-b})$ with some $i,j,a,b\in\bbZ_{\ge1}$.
At last the integral becomes the form as
\begin{align}
\label{eq:lem:I:2}
&I_n(Q,q,t)=
 \oint_{C_1}\dfrac{d x_1}{2\pi\sqrt{-1}} 
 \dfrac{\widetilde{f}(x_1)}
       {\prod_{\alpha,\beta}(x_1-Q^{1/2}q^\alpha t^{-\beta})
         (x_1-Q^{-1/2}q^\alpha t^{-\beta})}.
\end{align}
Here $\alpha$ and $\beta$ run over a finite set of non-negative integers,
and $\widetilde{f}$ is a polynomial of $x_1$.
We also find that no higher poles occur in this integral, i.e.,
the terms $x_1-Q^{\pm 1/2}q^\alpha t^{-\beta}$ are different from each other.
By the choice of $C_1$ all the poles of $x_1$ are inside the contour,
so that the result of the integral contains no singularity with respect to $Q$.
This is the desired consequence.
\end{proof}

Now we turn to the proof of Proposition \ref{prp:rec:1}
\begin{proof}[{Proof of  Proposition  \ref{prp:rec:1}}]
The integral expression \eqref{eq:int} of $Z_n$ 
is the same form as $I_n$ \eqref{eq:I} except for the term $f(x_1,\ldots,x_n)$ 
which is not holomorphic. 
Thus by the same argument of the proof of Lemma \ref{lem:I}, 
The pole of $Z_n$ as a rational function of $Q$ may only come from the 
collision of poles at $x_i=Q^{\pm1/2}q^{\alpha}t^{-\beta}$ and 
$x_i=Q^{\mp1/2}q^{-1}t$. 
Note that the singularity with respect to $Q$ are all single poles 
by the argument after \eqref{eq:lem:I:2} 
and by the assumption that the parameters are generic.
Then by the original definition \eqref{eq:nek} of $Z_n$, 
the poles of $Q$ are at 
$Q=q^{r}t^{-s}$ ($r,s\in\bbZ$, $1\le r s \le n$).
This is the desired consequence.
\end{proof}

The second step calculates the residue of $Z_n(Q,q,t)$ at $Q=q^r t^{-s}$.
\begin{prp}\label{prp:rec:2}
\begin{align}
\label{eq:prp:rec:2:0}
\Res_{Q=q^r t^{-s}} Z_n(Q,q,t)=G(r,s;q,t) Z_{n-r s}(q^r t^s,q,t).
\end{align}
\end{prp}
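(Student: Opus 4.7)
The plan is to use the integral representation from Proposition \ref{prp:int} and compute the residue at $Q = q^r t^{-s}$ directly from it. Recall from the proof of Proposition \ref{prp:rec:1} that a singularity of the integral in $Q$ arises only when two integration poles sitting on opposite sides of a contour $C_k$ collide. At $Q = q^r t^{-s}$, the identity $Q^{1/2} = Q^{-1/2} q^r t^{-s}$ is the driver: starting from a $Q^{-1/2}$-pole (inside some $C_k$) and propagating through $r$ collisions of type $x_i \to x_j q$ together with $s$ collisions of type $x_i \to x_j t^{-1}$, one lands exactly on the $Q^{1/2}$-pole (also inside the contour), producing the simple pole in $Q$.

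First I would identify the residue-contributing configurations combinatorially. From the enumeration in Proposition \ref{prp:int}, each pair $(\lambda,\mu)$ with $|\lambda| + |\mu| = n$ corresponds to a configuration of poles of shape \eqref{eq:specialization}; the pole at $Q = q^r t^{-s}$ is produced exactly by those configurations containing an $r \times s$ rectangle of variables linking the $Q^{1/2}$-cluster to the $Q^{-1/2}$-cluster. The remaining $n - rs$ variables form a configuration associated with a reduced pair $(\widetilde\lambda,\widetilde\mu)$ of total size $n - rs$.

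Next I would take the iterated residue in the $rs$ rectangle variables. The two $P$-factors at the endpoints $Q^{1/2}$ and $Q^{-1/2}$ of the rectangle chain combine to yield $-\sgn(r)\, q^r t^{-s}$, while the internal $\omega$-factors among the rectangle variables telescope along the rows and columns of the rectangle and collapse to
\[
\prod_{\substack{-|r| \le i \le |r|-1 \\ -|s| \le j \le |s|-1 \\ (i,j) \neq (0,0)}} \frac{1}{1 - q^i t^{-j}},
\]
so that the combination reproduces $G(r,s;q,t)$. The cross-terms $\omega(x_a/x_b)$ linking a rectangle variable $x_a$ with an outside variable $x_b$ telescope (the rectangle chain acting as a single effective source/sink) into a factor of the form $P\bigl(x_b; (q^r t^s)^{1/2}, q^{-1} t\bigr)$, so that the residual integral over the remaining $n - rs$ variables takes exactly the form of the integral in \eqref{eq:int} but with the parameter $Q$ replaced by $q^r t^s$. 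Applying Proposition \ref{prp:int} to this residual integral yields $Z_{n-rs}(q^r t^s, q, t)$, establishing \eqref{eq:prp:rec:2:0}. The case $r<0$ or $s<0$ follows from the duality \eqref{eq:Z:duality}, which is consistent with the $\sgn(r)$ sign in the definition of $G$.

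The main obstacle will be the explicit telescoping of the $\omega$-products, both internal to the rectangle and in the cross-terms with the remaining variables, and the verification that the latter collectively renormalize the parameter from $Q$ to $q^r t^s$. This is largely a book-keeping task, but it requires careful tracking of cancellations analogous to those performed between \eqref{eq:prp:int:prp:z} and \eqref{eq:prp:int:I:prp} in the proof of Proposition \ref{prp:int}; absent a clever reformulation, it amounts to a direct extension of that computation from a single-box removal to an $r \times s$ rectangle removal.
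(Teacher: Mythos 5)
Your proposal follows essentially the same route as the paper: isolate the $rs$ integration variables forming the $r\times s$ rectangle attached to the $Q^{-1/2}$ cluster, telescope the $P$- and $\omega$-factors so that the cross-terms renormalize the remaining integral to $Z_{n-rs}(q^r t^s,q,t)$ (this is exactly the computation \eqref{eq:Po}), and identify the rectangle's residue with $G(r,s;q,t)$. The only cosmetic difference is that the paper evaluates that last residue as $\Res_{Q=q^r t^{-s}}Z_{\emptyset,(r^s)}(Q,q,t)$ via the combinatorial formula \eqref{eq:N} rather than by telescoping inside the integral, which is equivalent by Proposition \ref{prp:int}.
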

\begin{proof}
We assume that $r,s\ge1$. The case $r,s\le -1$ can be treated similarly.

{}From the combinatorial expression \eqref{eq:nek}, 
the pole at $Q=q^r t^{-s}$ only comes from the pair $(\lambda,\mu)$ of 
partitions such that $\mu$ contains (if displayed by the Young diagram) 
the rectangle $r^s$.

The residue of the pole at $Q=q^r t^{-s}$ 
can be evaluated by using the integral form \eqref{eq:int}.
The condition of $(\lambda,\mu)$ corresponds to
taking the residues of the poles with respect to $x_i$ 
at $Q^{-1/2}, Q^{-1/2}q, \ldots, Q^{-1/2}q^r t^{-s}$.
By relabeling the indices of $x_i$, we have
\begin{align*}
&\Res_{Q=q^r t^{-s}} Z_n(Q,q,t)
=\\
&\dfrac{1}{(n-r s)!}\Bigg(\dfrac{1-q t^{-1}}{(1-q)(1-t^{-1})}\Bigg)^{n-r s}
\oint_{C_n}\dfrac{d x_n}{2\pi\sqrt{-1}}\cdots
\oint_{C_{r s+1}}\dfrac{d x_{r s+1}}{2\pi\sqrt{-1}}\Bigg(  
\\
& \prod_{k=r s+1}^n P(x_k;Q^{1/2},q^{-1} t) 
 \prod_{k<l} \omega(x_l/x_k;q,t^{-1},q^{-1} t) K_{r,s}\Bigg).
\end{align*}
Here the indices $i,j,k,l$ runs as $1\le i,j\le r s$ and $rs<k,l\le n$.
The function $K_{r,s}$ is defined to be
\begin{align*}
&K_{r,s}\seteq\Res_{Q=q^r t^{-s}} 
\dfrac{1}{(r s)!}\Bigg(\dfrac{1-q t^{-1}}{(1-q)(1-t^{-1})}\Bigg)^{r s}
\oint_{C_{r s}}\dfrac{d x_{r s}}{2\pi\sqrt{-1}}\cdots
\oint_{C_{1}}\dfrac{d x_{1}}{2\pi\sqrt{-1}}\Bigg(
\\
&\prod_{i,k} \omega(x_k/x_i;q,t^{-1},q^{-1} t)
 \prod_{i=1}^{r s} P(x_i;Q^{1/2},q^{-1} t)
 \prod_{i<j} \omega(x_j/x_i;q,t^{-1},q^{-1} t)
\Bigg)
\\
&=\Res_{Q=q^r t^{-s}} 
 Z_{\emptyset,(r^s)}(Q,q,t)\prod_{k=r s+1}^N
 \prod_{\alpha=1}^r\prod_{\beta=1}^s
 \omega(x_k/(Q^{-1/2} q^\alpha t^{-\beta});q,t^{-1},q^{-1}t).
\end{align*}
Note that
\begin{align}
\label{eq:Po}
\begin{split}
&P(x;Q^{1/2},q^{-1}t)
\prod_{\alpha=1}^r\prod_{\beta=1}^s
 \omega(x/(Q^{-1/2}q^\alpha t^{-\beta});q,t^{-1},q^{-1}t)\\
&=\dfrac{x}{(x-Q^{-1/2} q^r)(x-Q^{-1/2} t^{-s})
 (x-Q^{-1/2} q^{r}q^{-1}t)(x-Q^{-1/2} t^{-s} q^{-1} t)}\\
&\times \dfrac{(x-Q^{-1/2} q^{r} t^{-s})(x-Q^{-1/2} q^{r-1} t^{-s+1})}
       {(x-Q^{1/2})(x-Q^{1/2}q^{-1}t)}.
\end{split}
\end{align}
If $Q=q^r t^{-s}$, then the last line of \eqref{eq:Po} is equal to one 
and we have
\begin{align*}
&P(x;Q^{1/2},q^{-1}t)
\prod_{\alpha=1}^r\prod_{\beta=1}^s
 \omega(x/(Q^{1/2}q^\alpha t^{-\beta});q,t^{-1},q^{-1}t)\Big|_{Q=q^r t^{-s}}
\\
&=\dfrac{x}{(x- q^{r/2}t^{s/2})(x-q^{-r/2} t^{-s/2})
 (x-q^{r/2}t^{s/2} q^{-1}t)(x-q^{-r/2} t^{-s/2} q^{-1} t)}
\\
&=P(x;(q^{r}t^{s})^{1/2},q^{-1}t).
\end{align*}
Thus we obtain
\begin{align}
\label{eq:prp:rec:2:1}
\Res_{Q=q^r t^{-s}} Z_n(Q,q,t)
=Z_{n-r s}(q^{r}t^{s},q,t) \cdot 
 \Res_{Q=q^r t^{-s}} Z_{\emptyset,(r^s)}(Q,q,t).
\end{align}

Comparing \eqref{eq:prp:rec:2:1} and the statement \eqref{eq:prp:rec:2:0},
we find that it is enough to show 
$\Res_{Q=q^r t^{-s}} Z_{\emptyset,(r^s)}(Q,q,t)=G(r,s;q,t)$.
By the direct calculation we have
\begin{align*}
&N_{\emptyset,\emptyset}(1)=1,
\\
&N_{(r^s),(r^s)}(1)
=\prod_{\substack{-r\le i\le -1\\ 0\le j \le s-1}}(1-q^i t^{-j})
 \times 
 \prod_{\substack{0\le i\le r-1\\ -s\le j \le -1}}(1-q^i t^{-j}),
\\
&\Res_{Q=q^r t^{-s}} N_{(\emptyset),(r^s)}(Q)
 =(-q^r t^{-s})
\prod_{\substack{0\le i\le r-1\\ 0\le j \le s-1 \\ (i,j)\neq(0,0)}}
 (1-q^i t^{-j}),
\\
&N_{(r^s),(\emptyset)}(Q^{-1})\Big|_{Q=q^r t^{-s}}
 =\prod_{\substack{-r\le i\le -1\\ -s\le j \le -1}}(1-q^i t^{-j}).
\end{align*}
Multiplying these factors 
and recalling the definition \eqref{eq:g} of $G(r,s;q,t)$, 
we obtain
\begin{align*}
\Res_{Q=q^r t^{-s}} Z_{\emptyset,(r^s)}(Q,q,t)
=-q^r t^{-s}
\prod_{\substack{-r\le i\le r-1\\ -s\le j \le s-1 \\ (i,j)\neq(0,0)}}
(1-q^i t^{-j})=G(r,s;q,t).
\end{align*}
This is the desired consequence.
\end{proof}


\end{document}